\newtheorem{thm}{Theorem}[section]
\newtheorem{lem}[thm]{Lemma}
\newtheorem{prop}[thm]{Proposition}
\theoremstyle{definition}
\newtheorem{defn}[thm]{Definition}
\theoremstyle{remark}
\newtheorem{rem}[thm]{Remark}
\theoremstyle{example}
\numberwithin{equation}{section}
\newcommand{\abs}[1]{\left\vert#1\right\vert}
\newcommand{\norm}[1]{\left\Vert#1\right\Vert}
\begin{document}

\title[The Neumann problem for a class of semilinear fractional equations with critical exponent]
 {The Neumann problem for a class of semilinear fractional equations with critical exponent}
\author[S.\ Gandal, J.\,Tyagi]
{ Somnath Gandal, Jagmohan Tyagi }
\address{Somnath Gandal \hfill\break
 Indian Institute of Technology Gandhinagar \newline
 Palaj, Gnadhinagar Gujarat India-382355.}
\email{gandal$\_$somnath@iitgn.ac.in}
\address{Jagmohan\,Tyagi \hfill\break
 Indian Institute of Technology Gandhinagar \newline
 Palaj, Gandhinagar Gujarat, India-382355.}
 \email{jtyagi@iitgn.ac.in, jagmohan.iitk@gmail.com}
\date{\today}
\thanks{Submitted \today.  Published-----.}
\subjclass[2010]{35R11, 35B09, 35B40, 35J61, 35D30}
\keywords{Semilinear Neumann problem, fractional Laplacian, positive solutions, existence and uniqueness}

\begin{abstract}
We establish the existence of solutions to the following semilinear Neumann problem for fractional Laplacian and critical exponent:
\begin{align*}\left\{\begin{array}{l l} { (-\Delta)^{s}u+ \lambda u= \abs{u}^{p-1}u } & \text{in $ \Omega,$  } \\ 
\hspace{0.8cm} { \mathcal{N}_{s}u(x)=0 } & \text{in $ \mathbb{R}^{n}\setminus \overline{\Omega},$} \\
\hspace{1.6cm} {u \geq 0}& \text{in $\Omega,$} \end{array} \right.\end{align*}
where $\lambda > 0$ is a constant and $\Omega \subset \mathbb{R}^{n}$ is a bounded 
domain with smooth boundary. Here, $p=\frac{n+2s}{n-2s}$ is a critical exponent, $n > \max\left\{4s, \frac{8s+2}{3}\right\},$  $s\in(0, 1).$
Due to the critical exponent in the problem, the corresponding functional $J_{\lambda}$ does not satisfy the Palais-Smale (PS)-condition and therefore one cannot use standard variational methods to find the critical points of $J_{\lambda}.$ We overcome such difficulties by establishing a bound for Rayleigh quotient and with the aid of nonlocal version of the Cherrier's optimal Sobolev inequality in bounded domains.  We also show the uniqueness of these solutions in small domains.
\end{abstract}

\maketitle

\tableofcontents

\section{Introduction}

%\begin{thm}\label{fle}\cite{Di}

%\end{thm}
We establish the existence of non-constant solutions to the following semilinear Neumann problem with fractional Laplace operator 
$(-\Delta)^{s},~s\in (0,1)$:  
\begin{align}\label{eqn:P1}\left\{\begin{array}{l l} { (-\Delta)^{s}u+ \lambda u= \abs{u}^{p-1}u } & \text{in $\Omega,$  } \\ 
		\hspace{0.9cm} { \mathcal{N}_{s}u(x)=0 } & \text{in $\mathbb{R}^{n}\setminus \overline{\Omega},$} \\
		\hspace{1.8cm} {u \geq 0} & \text{in $\Omega,$} \end{array} \right.\end{align}
where $\Omega \subset \mathbb{R}^{n}$ is a $C^{1},$ smooth bounded domain, $\lambda >0$ is a constant and $p = \frac{n+2s}{n-2s}, n> \max\left\{4s, \frac{8s+2}{3}\right\}.$
For smooth functions, the fractional Laplace operator $(-\Delta)^{s}$ is defined 
as follows:
\begin{equation}
(-\Delta)^{s}u(x):=c_{n,s}P.V.\int_{\mathbb{R}^{n}} \frac{u(x)-u(y)}{{\lvert x-y \rvert}^{n+2s}}dy, \,\,\, s\in (0,1).
\end{equation}
Here, by P.V., we mean, ``in the principal value sense" and $c_{n,s}$ is a normalizing constant given by
\begin{equation}
\label{eqn:ncon}
c_{n,s}= \biggl(\int_{\mathbb{R}^{n}}\frac{1-cos(\theta_{1})}{{\lvert \theta \rvert}^{n+2s}}d\theta \biggr )^{-1},
\end{equation}
see for example \cite{B4,E1}. By the operator $\mathcal{N}_{s}w,$ we mean nonlocal 
normal derivative defined for smooth functions by
\begin{equation}
	\mathcal{N}_{s}w(x):= c_{n,s}\int_{\Omega} \frac{w(x)-w(y)}{{\lvert x-y \rvert}^{n+2s}}dy, 
	\,\,\,\,\, x\in \mathbb{R}^{n} \setminus \overline{\Omega},
\end{equation} 
where $c_{n,s}$  is the same normalizing constant given in (\ref{eqn:ncon}).
\begin{rem}
	The nonlocal normal derivative $\mathcal{N}_{s}w$ defined above approaches the classical one $\partial_{\nu}w$ 
	when $s\rightarrow 1$ (see, Proposition 5.1 \cite{S1}). 
\end{rem}
In the  recent years, there has been a growing interest to establish the existence, asymptotic behaviour and other 
related qualitative 
questions to nonlocal problems which involve $(-\Delta)^{s}.$
See for example \cite{B5,B1,B4,S2,J1,G1,X1,X2} and references therein. The study on these problems with nonlocal 
diffusion operators is motivated by its applications in image processing \cite{G1}, fluid dynamics \cite{N1,A7,R1},
finances \cite{P1,W1} and many others. For more about Neumann problems which include the fractional Laplacian $(-\Delta)^{s}$ as well as other 
$s$-nonlocal operators, we refer to \cite{Barl,Cor,S1,Gru,Mon} and the references therein.

%\begin{equation}\label{eqn:P1}
%\left \{\begin{array} { l l } {(-\Delta)^{s}u+ \lambda u=\abs{u}^{p-1}u} & {x \in \Omega , }\\{\hspace{0.9cm} \mathcal{N}_{s}u(x)=0 } & { x \in \mathbb{R}^{n}\setminus \overline{\Omega},} \\ \hspace{1.8cm}{u>0,}\end{array} \right
%\end{equation}
Problem (\ref{eqn:P1}) is a fractional counterpart of the following classical Neumann problem: 
\begin{equation}\label{eqn:Pi}
\left\{\begin{array}{l l} { -\Delta u+ \lambda u= \abs{u}^{p-1}u } & \text{in $\Omega,$  } \\ 
\hspace{1.6cm} { \frac{\partial u}{\partial \nu}=0 } & \text{on $\partial \Omega,$} \\
\hspace{1.8cm} {u>0} & \text{in $\Omega,$} \end{array} \right.
\end{equation}
where $\lambda >0$ is a constant and $\Omega \subset \mathbb{R}^{n}$ is bounded domain with smooth boundary. There has been a good amount of works where the authors have investigated \eqref{eqn:Pi} in different contexts. 
We mention only closely related papers to bring up earlier developments.
%  \{ \begin{array} { l l } {-\Delta u+ \lambda u=\abs{u}^{p-1}u} & {x \in \Omega}\\ \hspace{1.4cm}{u>0} \\{\hspace{1.2cm} \frac{\partial u}{\partial \nu}=0 } & { x \in \mathbb{R}^{n}\setminus \overline{\Omega}}.  \end{array} 
The subcritical case, i.e., $1<p<\frac{n+2}{n-2}, n\geq 3$ of (\ref{eqn:Pi}) has been deeply studied in the papers of Lin et al. \cite{C1} and C. S. Lin and W. M. Ni \cite{C2}. They have shown that (\ref{eqn:Pi}) admits a
nonconstant solution for $\lambda$ sufficiently large and does not have any nonconstant solution for $\lambda$ small.
The situation in the critical case, i.e., $p=\frac{n+2}{n-2}$ is quite different than that of subcritical case. The nonconstant solutions of (\ref{eqn:Pi}) correspond to the critical points of the energy functional 
$$ F_{\lambda}(u)= \frac{1}{2}\int_{\Omega} \abs{\nabla u}^{2}dx +\frac{\lambda}{2} 
\int_{\Omega}u^{2}dx-\frac{1}{p+1}\int_{\Omega}\abs{u}^{p+1}dx, \hspace{0.3cm}u\in H^{1}(\Omega).$$
Due to lack of compactness for the embedding $H^{1}(\Omega) \hookrightarrow L^{\frac{2n}{n-2}}(\Omega),$ 
the energy functional $F_{\lambda}$ does not satisfy the Palais-Smale (PS) condition. Therefore, one cannot use the 
standard variational methods to find the critical points of energy functional. 
It is crucial to mention that the geometry of boundary plays an important role to establish the existence and 
non-existence of solutions in the critical case.  Adimurthi and G. Mancini \cite{A2} and X. J. Wang \cite{X3} have shown the 
existence of nonconstant solutions of (\ref{eqn:Pi}) with critical exponent for $\lambda$ sufficiently large in general 
domains $\Omega.$ 
When $\Omega$ is a ball and $\lambda$ is very small, Adimurthi and S. L. Yadava \cite{A6} and Budd et al. \cite{C3} have proved the existence of nonconstant solution in lower dimensions $n=4,5,6.$ In those works, authors have considered the 
equations with critical exponents. 

We remark that in elliptic problems, where the associated energy functional does not satisfy (PS)-condition, Cherrier's optimal Sobolev inequlity plays a very crucial role, see \cite{A1}. One may refer to Theorem 2.30 \cite{Aub} and \cite{Ch} for the details about the Cherrier's optimal Sobolev inequality.

Recently, E. Cinti and F. Colasuonno \cite{Cin} considered nonlocal problems with Neumann boundary condition and established the existence of nonnegative radial solutions to the following problem:
\begin{align}\left\{\begin{array}{l l} { (-\Delta)^{s}u+ u= h(u) } & \text{in $\Omega,$  } \\ 	\hspace{0.9cm} { \mathcal{N}_{s}u(x)=0 } & \text{in $\mathbb{R}^{n}\setminus \overline{\Omega},$}  \end{array} \right.\end{align}
where $s>\frac{1}{2},$ $\Omega$ is either a ball or annulus and $h$ is a superlinear nonlinearity.
In \cite{R2,R3}, R. Servadei and E. Valdinoci studied the Brezis-Nirenberg problem with Dirichlet boundary in  
nonlocal case for 
subcritical as well as critical exponent. More precisely, they considered the nonlinearity of the
form $ \lambda u + u^{p-1},$ with $\lambda > 0$ and $1< p \leq\frac{n+2s}{n-2s}.$ Bhakta et al. \cite{B6,B7} studied the 
semilinear problem for fractional Laplacian with nonlocal Dirichlet datum with nonlinearity involving critical and 
supercritical growth. We refer to \cite{mug}, where D. Mugnai and E. Proietti Lippi established the existence of nontrivial solutions 
for Neumann fractional $p$-Laplacian using the notion of linking over cones. We refer to \cite{nsu} 
for the existence of solutions of a semilinear problem with the spectral Neumann fractional Laplacian 
and a critical nonlinearity.\\
\indent It is easy to see that $u_{0}=0$ and $u_{\lambda}= \lambda^{\frac{1}{p-1}}= \lambda^{\frac{(n-2s)}{4s}}$ are the only constant
solutions of (\ref{eqn:P1}). Therefore, motivated by the local case, we are interested in finding nonconstant solutions of
(\ref{eqn:P1}). Barrios et al. \cite{B1} have shown that there is a nonconstant solution to (\ref{eqn:P1}) in the 
subcritical case $ 1<p<\frac{n+2s}{n-2s}.$ The nonconstant solutions of this problem corresponds to the critical points of 
the energy functional $J_{\lambda},$ where $J_{\lambda}$ is given by
$$ J_{\lambda}(u):=\frac{1}{2} \Big[ \frac{c_{n,s}}{2} \int_{T(\Omega)} \frac{\abs{u(x)-u(y)}^{2}}{\abs{x-y}^{n+2s}}dxdy + 
\lambda \int_{\Omega}u^{2}dx \Big] -\frac{1}{p+1}\int_{\Omega}{\lvert u \rvert}^{p+1}dx, \,\,\,\, u \in H_{\Omega}^{s}.$$
In above equation $T(\Omega)= \mathbb{R}^{2n}\setminus (\mathbb{R}^{n}\setminus \Omega)^{2}$ and the space $H_{\Omega}^{s}$ is
defined further in (\ref{space}).
They have used the Mountain-Pass Lemma by Ambrosetti-Rabinowitz \cite{A4} to find the critical points of this 
functional. Since $H^{s}_{\Omega}$ is not compactly embedded in  $L^{\frac{2n}{n-2s}}(\Omega),$ 
the classical variational methods fail in the critical case (i.e., $p=\frac{n+2s}{n-2s})$ 
to find the critical points of $J_{\lambda}.$ In such a challenging situation, it is significant to establish the following inequality for suitable values of $\lambda$:
\begin{align} \label{optimal}
\inf_{u \in H^{s}_{\Omega}, \norm{u}_{L^{p+1}(\Omega)}=1} \left\{\frac{c_{n,s}}{2} \int_{T(\Omega)} \frac{\abs{u(x)-u(y)}^{2}}{\abs{x-y}^{n+2s}}dxdy + \lambda \int_{\Omega}u^2 dx \right\} < \frac{S}{2^{\frac{2s}{n}}},
\end{align}
where $S$ is the best constant for the fractional Sobolev embedding $$ H^{s}(\mathbb{R}^n) \hookrightarrow L^{\frac{2n}{n-2s}}(\mathbb{R}^{n}).$$ For the classical case, we refer to some well known works of T. Aubin \cite{Au}, H. Brezis and L. Nirenberg \cite{B3}, H. Brezis \cite{Bre} and Adimurthi et al. \cite{A2, A1}. In order to establish the above inequality (\ref{optimal}), we evaluate the ratio
\begin{align}
	K_{\lambda}(V_{\epsilon})=\frac{\frac{c_{n,s}}{2} \int_{T(\Omega)}\frac{{\lvert V_{\epsilon}(x)-V_{\epsilon}(y)\rvert}^{2}}
		{{\lvert x-y \rvert}^{n+2s}}dxdy+ \lambda\int_{\Omega} V_{\epsilon}^{2}dx}{\biggl(\int_{\Omega} 
		{\lvert V_{\epsilon} \rvert}^{p+1}dx\biggr)^{\frac{2}{p+1}}},
\end{align}
where $$V_{\epsilon}(x)= \frac{\psi(x)}{(\epsilon + \abs{x}^{2})^{\frac{(n-2s)}{2}}}, \, \epsilon >0$$ and 
$\psi$ is a cut-off function. The functions $${(\epsilon + \abs{x}^{2})^{\frac{-(n-2s)}{2}}}$$ are extremals for the fractional Sobolev 
embedding in $\mathbb{R}^{n}$. 
Next, we obtain the following nonlocal analog of Cherrier's optimal Sobolev inequality. 
\begin{thm} (Nonlocal version of Cherrier's optimal Sobolev inequality)\label{best4}
Let $\Omega \subset \mathbb{R}^n$ be a bounded domain of class $C^{1}.$ Then for every $\epsilon>0,$ there exists a constant $A(\epsilon)>0$ such that for any $u \in H^{s}_{\Omega},$ we have
	\begin{align}\label{best4.01}
		\left(\int_{\Omega} \abs{u}^{p+1} dx \right)^{\frac{2}{p+1}} \leq \left(\frac{2^{\frac{2s}{n}}}{S}+\epsilon \right) \frac{c_{n,s}}{2} \int_{T(\Omega)} \frac{\abs{u(x)-u(y)}^2}{\abs{x-y}^{n+2s}} dxdy + A(\epsilon) \int_{\Omega} u^2 dx.
	\end{align} 
\end{thm}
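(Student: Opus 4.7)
The plan mirrors the classical Cherrier proof via a partition-of-unity argument: localize the fractional Sobolev inequality on interior pieces, where the best constant $1/S$ applies, and on boundary pieces, where an improved half-space constant $2^{2s/n}/S$ can be derived; then reassemble, absorbing lower-order remainders into $A(\epsilon)\int_{\Omega}u^{2}\,dx$.

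Fix $\epsilon>0$ and choose a small $\delta=\delta(\epsilon)>0$ to be determined. Cover $\overline{\Omega}$ by finitely many balls $B_{i}=B(x_{i},\delta)$, split into \emph{interior} centers ($x_{i}\in\Omega$ with $B_{i}\Subset\Omega$) and \emph{boundary} centers ($x_{i}\in\partial\Omega$). Construct a smooth partition $\{\eta_{i}^{2}\}$ subordinate to $\{B_{i}\}$ with $\sum_{i}\eta_{i}^{2}\equiv 1$ on $\overline{\Omega}$ and each $\eta_{i}$ Lipschitz with constant $O(\delta^{-1})$. Applying H\"older to the identity $|u|^{p+1}=\sum_{i}|u|^{p-1}(\eta_{i}u)^{2}$ yields the subadditivity
$$\|u\|_{L^{p+1}(\Omega)}^{2}\leq \sum_{i}\|\eta_{i}u\|_{L^{p+1}(\Omega)}^{2}.$$
For an interior $B_{i}$, $\eta_{i}u$ has support compactly contained in $\Omega$, so the full fractional Sobolev inequality on $\mathbb{R}^{n}$ gives the bound with constant $1/S$ times the full Gagliardo integral of $\eta_{i}u$. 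For a boundary $B_{i}$, a $C^{1}$ diffeomorphism $\Psi_{i}$ flattens $\partial\Omega$ near $x_{i}$, mapping $\Omega\cap B_{i}$ onto (essentially) a half-ball and $\mathbb{R}^{n}\setminus\overline{\Omega}$ onto the complementary half-ball; the associated Jacobians are $1+O(\delta)$.

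The decisive input is the following nonlocal half-space estimate: for $v$ concentrated near the origin,
$$\|v\|_{L^{p+1}(\mathbb{R}^{n}_{+})}^{2}\leq \frac{2^{2s/n}}{S}\cdot \frac{c_{n,s}}{2}\int_{T(\mathbb{R}^{n}_{+})}\frac{|v(x)-v(y)|^{2}}{|x-y|^{n+2s}}\,dx\,dy.$$
The constant is motivated by testing against Sobolev extremals $U_{\tau}(x)=(\tau+|x|^{2})^{-(n-2s)/2}$ centered on the hyperplane $\{x_{n}=0\}$: reflection symmetry gives $\|U_{\tau}\|_{L^{p+1}(\mathbb{R}^{n}_{+})}^{p+1}=\tfrac{1}{2}\|U_{\tau}\|_{L^{p+1}(\mathbb{R}^{n})}^{p+1}$, and the double integral over $T(\mathbb{R}^{n}_{+})=\mathbb{R}^{2n}\setminus(\mathbb{R}^{n}_{-})^{2}$ is at least half of that over $\mathbb{R}^{2n}$ (since by symmetry the contribution from $(\mathbb{R}^{n}_{-})^{2}$ equals that from $(\mathbb{R}^{n}_{+})^{2}$). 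Combining with the algebraic identity $(1/2)^{2/(p+1)}=\tfrac{1}{2}\cdot 2^{2s/n}$ and the optimality of $S$ on $\mathbb{R}^{n}$ produces the factor $2^{2s/n}/S$. Transplanting back to $\Omega$ via $\Psi_{i}$ introduces an $O(\delta)$ multiplicative error on the kernel and yields
$$\|\eta_{i}u\|_{L^{p+1}(\Omega)}^{2}\leq \left(\frac{2^{2s/n}}{S}+C\delta\right)\frac{c_{n,s}}{2}\int_{T(\Omega)}\frac{|(\eta_{i}u)(x)-(\eta_{i}u)(y)|^{2}}{|x-y|^{n+2s}}\,dx\,dy.$$

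Finally, to recover the Gagliardo integral of $u$ from the localized ones, use the pointwise bound
$$|(\eta_{i}u)(x)-(\eta_{i}u)(y)|^{2}\leq (1+\epsilon')\eta_{i}(x)^{2}|u(x)-u(y)|^{2}+C(\epsilon')u(y)^{2}|\eta_{i}(x)-\eta_{i}(y)|^{2},$$
sum over $i$ using $\sum_{i}\eta_{i}^{2}\equiv 1$, and observe that $y\mapsto \sum_{i}c_{n,s}\int\frac{|\eta_{i}(x)-\eta_{i}(y)|^{2}}{|x-y|^{n+2s}}\,dx$ is a bounded function of $y$ (with bound depending on $\delta$), since the $\eta_{i}$ are $C^{\infty}$ with compact support. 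Choosing first $\epsilon'$ and then $\delta$ small enough in terms of $\epsilon$ absorbs all remainders into the target prefactor $2^{2s/n}/S+\epsilon$ and the $L^{2}$ term $A(\epsilon)\int_{\Omega}u^{2}\,dx$. The principal obstacle is the boundary step: unlike the local case, where reflection exactly halves both the Dirichlet energy and the $L^{p+1}$ norm, the nonlocal Gagliardo integral over $T(\mathbb{R}^{n}_{+})$ does not split cleanly, so one must carefully quantify the cross-interaction between $\mathbb{R}^{n}_{+}$ and $\mathbb{R}^{n}_{-}$ and control the deformation of the nonlocal kernel under the boundary-flattening diffeomorphism; the smoothness of $\partial\Omega$ is essential to make the latter error uniformly small.
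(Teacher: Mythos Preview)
Your outline follows essentially the same route as the paper: a partition-of-unity localization, the sharp constant $1/S$ on interior pieces, the constant $2^{2s/n}/S$ on boundary (half-space) pieces, and a Young-type product estimate to reassemble, with the cut-off remainders absorbed into $A(\epsilon)\int_{\Omega}u^{2}$. The paper writes the partition as $\sum_i h_i=1$ and works with $u\,h_i^{1/2}$, which is exactly your $\eta_i=h_i^{1/2}$ with $\sum_i\eta_i^2=1$; the subadditivity and the reassembly step are the same computations.

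The one step you should tighten is the half-space inequality. Testing the Sobolev extremals $U_\tau$ only shows that $2^{2s/n}/S$ is \emph{sharp}; it does not establish the inequality for all $v$. The paper's argument (its Lemma~3.2) takes an arbitrary $v$ supported in $\overline{\mathbb{R}^n_+}\cap B_1$, extends it by \emph{even reflection} to $\tilde v$ on $\mathbb{R}^n$, applies the full-space Sobolev inequality to $\tilde v$, and then uses
\[
\int_{\mathbb{R}^n}|\tilde v|^{p+1}=2\int_{\mathbb{R}^n_+}|v|^{p+1},\qquad
\int_{\mathbb{R}^{2n}}\frac{|\tilde v(x)-\tilde v(y)|^2}{|x-y|^{n+2s}}\,dx\,dy\le 2\int_{T(\mathbb{R}^n_+)}\frac{|v(x)-v(y)|^2}{|x-y|^{n+2s}}\,dx\,dy,
\]
the last because, for the even function $\tilde v$, the contribution of $(\mathbb{R}^n_-)^2$ equals that of $(\mathbb{R}^n_+)^2\subset T(\mathbb{R}^n_+)$. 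Your reflection remarks are exactly these ingredients, but they must be applied to the even extension of a \emph{general} $v$, not to $U_\tau$. With that correction, your proof and the paper's coincide; your added care about the $O(\delta)$ kernel error under the boundary-flattening diffeomorphism is a point the paper glosses over.
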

The inequalities (\ref{optimal}) and (\ref{best4.01}) are instrumental to prove the following existence result of this paper.
\begin{thm} \label{2}
	Let $\Omega \subset \mathbb{R}^{n} $ be a bounded domain of class $C^{1}.$ Let $p=\frac{n+2s}{n-2s}$, $n > \max\left\{4s, \frac{8s+2}{3}\right\}$ 
	and $s\in (0,1).$ Then there exists a constant $\lambda^{*}>0$ such that for $\lambda> \lambda^{*},$ the problem
	\begin{equation}\label{eqn:P2}
		\left\{\begin{array}{l l} { (-\Delta)^{s}u+ \lambda u= \abs{u}^{p-1}u } & \text{in $\Omega,$  } \\ 
			\hspace{0.9cm} { \mathcal{N}_{s}u(x)=0 } & \text{in $\mathbb{R}^{n}\setminus \overline{\Omega},$} \\
			\hspace{1.8cm} {u\geq 0} &\text{in $\Omega$,} \end{array} \right.
	\end{equation}
	admits a non-constant solution $v_{0}$ such that $J_{\lambda}(v_{0})<\frac{s S^{\frac{n}{2s}}}{2n}.$ 
\end{thm}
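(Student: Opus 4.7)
The plan is to apply the Mountain-Pass Lemma of Ambrosetti--Rabinowitz to the functional $J_\lambda$ on the Hilbert space $H^s_\Omega$, and then to show that the resulting Mountain-Pass level $c$ lies strictly below the threshold $\tfrac{sS^{n/(2s)}}{2n}$ below which compactness can be recovered. The Cherrier-type inequality of Theorem~\ref{best4} is the key new tool: it replaces the global Sobolev constant $S$ by $S/2^{2s/n}$ on bounded domains, reflecting that a concentrating profile near the boundary ``sees'' only half of the fractional Sobolev extremal, and this factor of $2$ is precisely what produces the $2n$ in the denominator of the critical level.

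First I would verify the Mountain-Pass geometry. Write $\|u\|_\lambda^{2} := \tfrac{c_{n,s}}{2}\int_{T(\Omega)}\tfrac{|u(x)-u(y)|^2}{|x-y|^{n+2s}}\,dx\,dy + \lambda \int_{\Omega} u^{2}\,dx$. Since $J_\lambda(0)=0$ and $H^s_\Omega \hookrightarrow L^{p+1}(\Omega)$ is continuous, one has $J_\lambda(u) \geq \tfrac12\|u\|_\lambda^2 - C\|u\|_\lambda^{p+1}$, so $J_\lambda \geq \alpha>0$ on some small sphere, while for any fixed nonzero $v\geq 0$, $J_\lambda(tv)\to -\infty$ as $t\to\infty$. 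A direct computation gives the classical identity
\begin{equation*}
\max_{t\geq 0} J_\lambda(tv) \;=\; \tfrac{s}{n}\, K_\lambda(v)^{n/(2s)}.
\end{equation*}
Taking the path $\gamma(t) = t\,T V_\epsilon$ with $T$ large enough to have $J_\lambda(\gamma(1))<0$, and combining this identity with the Rayleigh-quotient estimate \eqref{optimal} (valid for a suitable $\epsilon=\epsilon(\lambda)$ once $\lambda$ exceeds some $\lambda^{*}$) yields
\begin{equation*}
c \;\leq\; \max_{t\in[0,1]} J_\lambda(\gamma(t)) \;=\; \tfrac{s}{n}\,K_\lambda(V_\epsilon)^{n/(2s)} \;<\; \tfrac{s}{n}\Bigl(\tfrac{S}{2^{2s/n}}\Bigr)^{n/(2s)} \;=\; \tfrac{s S^{n/(2s)}}{2n}.
\end{equation*}
I expect the sharp expansion of $K_\lambda(V_\epsilon)$ that underlies \eqref{optimal} to be the main technical step; the dimension restriction $n > \max\{4s,(8s+2)/3\}$ enters here, so that the cut-off error and the $\lambda\int V_\epsilon^2$ correction can be absorbed into the leading term.

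The second key step is to establish the Palais--Smale condition at every level $c < \tfrac{sS^{n/(2s)}}{2n}$. Given a (PS)$_c$ sequence $(u_k)\subset H^s_\Omega$, boundedness follows routinely from $J_\lambda(u_k)=c+o(1)$, $\langle J_\lambda'(u_k),u_k\rangle = o(\|u_k\|_\lambda)$ and $p+1>2$. Up to a subsequence $u_k \rightharpoonup u$ in $H^s_\Omega$ and $u_k \to u$ strongly in $L^{2}(\Omega)$ by Rellich; since $u$ is a weak critical point, $J_\lambda(u) = \tfrac{s}{n}\|u\|_\lambda^2 \geq 0$. Applying the Brezis--Lieb lemma to both the Gagliardo seminorm and to $\|u_k\|_{L^{p+1}}^{p+1}$, and Theorem~\ref{best4} (with $\epsilon>0$ small) to $w_k := u_k - u$, yields the dichotomy: either $w_k \to 0$ strongly in $H^s_\Omega$, or
\begin{equation*}
\liminf_{k\to\infty}\ \tfrac{c_{n,s}}{2}\int_{T(\Omega)}\tfrac{|w_k(x)-w_k(y)|^2}{|x-y|^{n+2s}}\,dx\,dy \;\geq\; \tfrac{S^{n/(2s)}}{2}.
\end{equation*}
In the second alternative, the Brezis--Lieb splitting of $J_\lambda(u_k)$ gives $c \geq J_\lambda(u) + \tfrac{sS^{n/(2s)}}{2n} \geq \tfrac{sS^{n/(2s)}}{2n}$, contradicting the range of $c$. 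Hence (PS)$_c$ holds throughout the admissible range.

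Combining these two steps via the Mountain-Pass Lemma yields a critical point $v_0 \in H^s_\Omega$ with $J_\lambda(v_0)=c\in(0,\tfrac{sS^{n/(2s)}}{2n})$. A standard truncation argument together with a fractional maximum principle adapted to the operator $\mathcal N_s$ delivers $v_0 \geq 0$. Finally, $v_0$ must be non-constant: the only constant solutions of \eqref{eqn:P2} are $0$ and $u_\lambda := \lambda^{(n-2s)/(4s)}$, and a direct calculation using $u_\lambda^{p+1}=\lambda u_\lambda^{2}$ gives
\begin{equation*}
J_\lambda(u_\lambda) \;=\; \tfrac{s}{n}|\Omega|\,\lambda^{n/(2s)},
\end{equation*}
which exceeds $\tfrac{sS^{n/(2s)}}{2n}$ for every sufficiently large $\lambda$. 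Enlarging $\lambda^{*}$ if necessary therefore produces the desired non-constant, nonnegative solution whenever $\lambda > \lambda^{*}$.
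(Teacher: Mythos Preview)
Your proposal is correct, but it takes a different route from the paper. The paper proceeds by direct constrained minimization rather than Mountain-Pass: it first shows (Lemma~\ref{P}) that $X_\lambda=\inf K_\lambda<S/2^{2s/n}$ for \emph{every} $\lambda>0$, then (Lemma~\ref{4}) uses the Cherrier inequality together with Brezis--Lieb on a minimizing sequence to show this infimum is attained at some $v$, and finally scales $v_0=X_\lambda^{(n-2s)/(4s)}v$ to obtain a solution with $J_\lambda(v_0)=(s/n)X_\lambda^{n/(2s)}<sS^{n/(2s)}/(2n)$. Your Mountain-Pass argument is equivalent in content---by your own identity $\max_{t\geq 0}J_\lambda(tv)=(s/n)K_\lambda(v)^{n/(2s)}$, the Mountain-Pass level coincides with $(s/n)X_\lambda^{n/(2s)}$---but it trades the elementary minimizing-sequence compactness argument for the full (PS)$_c$ machinery. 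The paper's route is shorter and handles nonnegativity trivially (since $K_\lambda(|u|)\leq K_\lambda(u)$, the minimizer may be taken nonnegative with no maximum-principle argument at all), whereas your (PS)$_c$ analysis would be reusable in settings lacking a constrained-minimum structure. One small inaccuracy in your write-up: the Rayleigh-quotient bound \eqref{optimal} holds for all $\lambda>0$, not only for large $\lambda$; the threshold $\lambda^*=S/(2|\Omega|)^{2s/n}$ enters solely to force $J_\lambda(u_\lambda)>sS^{n/(2s)}/(2n)$ and hence rule out the constant solution, exactly as you do at the end.
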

%Similiar to problem  (\ref{eqn:Pi}), the critical case, i.e. $p=\frac{n+2s}{n-2s}$ of problem (\ref{eqn:P1}) is also challenging to analyse.

\indent There has also been a good amount of interest to study the asymptotic behaviour of minimal energy solutions to 
semilinear Neumann problems. W. M. Ni and I. Takagi \cite{W2,W3} have shown that the minimal energy solutions $v_{\lambda}$ of 
(\ref{eqn:Pi}) attains the maximum at unique point on the boundary of $\Omega$ for $\lambda$ large. 
Even in the critical case, the same has been proved by Adimurthi et al. \cite{A8}. There has also been some other type of results on the asymptotic behaviour. For this,
we consider the following family of domains
$$ \Omega_{\eta}=\bigl \{\eta x : x\in \Omega \bigr \} $$
to the asymptotic behaviour of the solutions. For contracting domains, i.e, $\eta \rightarrow 0$ and expanding domains,
i.e., $\eta \rightarrow \infty,$
the detailed analysis of the behaviour of best Sobolev constants and extremals, occurring  due to classical Sobolev 
embedding can be seen in \cite{C4,J1,J3} and the references therein. Let $S_{q}(\Omega)$ be the best constant for
the Sobolev trace embedding $ W^{1,p}(\Omega) \hookrightarrow L^{q}(\partial \Omega),$
where $n \geq 2, 1<p<n,$ and $1<q\leq \frac{p(n-1)}{n-p}.$ 
In particular, when $n \geq 3,~ p=2$ and
$ 2<q< \frac{2(n-1)}{n-2},$ 
del Pino et al. \cite{C4} studied the asymptotic behaviour of best constant $S_{q}(\Omega)$ and associated family of 
extremals in expanding domains. They have shown that $S_{q}(\Omega_{\lambda})$ approaches to $S_{q}(\mathbb{R}_{+}^{n})$ 
and the 
extremals develop a peak near the point, where the curvature of the boundary attains a maximum. In more general case, 
when $1<p<n, 1<q < \frac{p(n-1)}{n-p},$  Fern\'{a}ndez Bonder et al.\,\cite{J3} discussed the asymptotic behaviour 
of $S_{q}(\Omega)$ 
in expanding and contracting domains. In that paper, they have proved that the behaviour of $S_{q}(\Omega)$ depends on 
$p$ and $q$. Also, the extremals converge to a constant in contracting domains and concentrates at the boundary in 
expanding domains.\\
\indent Let us define 
\begin{align}
Y_{p}(\Omega):=\inf_{v\in H^{s}(\Omega)\setminus \{0\} } \frac{\norm{v}_{H^{s}(\Omega)}^{2}}{\norm{v}_{L^{p+1}(\Omega)}^{2}}.
\end{align}
For smooth bounded domains $\Omega \subset \mathbb{R}^{n}$, we have that $H^{s}(\Omega)$  is compactly embedded in 
$L^{p+1}(\Omega)$ for $0\leq p<\frac{n+2s}{n-2s}$. Therefore, we have the existence of extremal for above equation 
in subcritical case $1<p<\frac{n+2s}{n-2s}.$ Any extremal for $Y_{p}$ is a weak solution to following 
problem $$ (-\Delta)^{s}u+u=\lambda \abs{u}^{p-1}u,$$ where $\lambda$ is some constant.
%Fern\'{a}nde
Very recently,  Fern\'{a}ndez Bonder et al. \cite{J2} described the asymptotic behaviour of the best constants $ Y_{p}(\Omega_{\eta})$ and 
corresponding extremals as $\eta$ goes to zero in the \textit{subcritical} case. More precisely, let $u_{\eta}$ be an extremal for $Y_{p}(\Omega_{\eta}).$ Then it has been shown that any
rescaled extremals $$\bar{u}_{{\eta}}=u_{\eta}(\eta x)$$ when normalized as $$\norm{\bar{u}_{{\eta}}}_{L^{p+1}{(\Omega)}}=1 $$ 
converge strongly to $\abs{\Omega}^{\frac{-1}{p+1}}$ in $H^{s}(\Omega).$  Using this  asymptotic behaviour, they showed the uniqueness of minimal energy solutions for contracted domains, see \cite{J2}.

Motivated by the above work, we show the uniqueness of minimal energy solutions of (\ref{eqn:P1}). We prove the uniqueness of minimal energy 
solutions using implicit function theorem and following the similar approach as in \cite{J2} for \textit{subcritical} case. More specifically, we have 
%Motivated by the above works on nonlocal equations, it is natural to ask the  following questions
%in \textit{critical} case:\\
%Q1. Can we establish the existence of a solution to \eqref{eqn:P1}?\\
%Q2. Can we show the uniqueness of minimal energy solutions of (\ref{eqn:P1}) 
%in thin domains?\\
%The aim of this paper is to answer the above two questions in the \textit{critical} case.\\
%% There we have generalised the ideas developed by Bonder et al. \cite{J2} to prove the same for minimal energy solutions of (\ref{eqn:P1}).
%More precisely, the main results of this paper are the following theorems, which we shall prove in next sections.
\begin{thm}\label{minimizer4.6}
Let $\Omega \subset \mathbb{R}^{n},~ p,~ n,~s$ and $\lambda^{*}$ are same as in the Theorem \ref{2}. Let $\lambda > \lambda^{*},$ then there exists 
a $\delta>0$ such that $X_{\lambda}(\Omega_{\eta})$ has a unique minimizer for $0<\eta<\delta$, where $X_{\lambda}(\Omega_{\eta})$
is defined in (\ref{X}).
\end{thm}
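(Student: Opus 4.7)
The plan is to mimic the approach of \cite{J2}, which proves the analogous subcritical uniqueness via the implicit function theorem, and adapt it to the critical setting using Theorem \ref{2} to secure the existence of the minimizer whose energy lies below the concentration threshold $\tfrac{s}{2n}S^{n/(2s)}$. The strategy rests on three pieces: a rescaling that turns $\Omega_\eta$ into the fixed domain $\Omega$; convergence of the rescaled minimizers to the unique nontrivial constant solution as $\eta\to 0$; and an implicit function theorem argument around that constant.

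First, I would rescale. For $u$ defined on $\Omega_\eta$, set $\bar u(x)=\eta^{(n-2s)/2}u(\eta x)$, which is the scaling that leaves both the fractional seminorm and the $L^{p+1}$-norm unchanged (since $p+1=2n/(n-2s)$). Using that $(-\Delta)^s$ and $\mathcal N_s$ are homogeneous of degree $2s$, $u$ is a critical point of $X_\lambda(\Omega_\eta)$ if and only if $\bar u$ solves
\begin{equation*}
(-\Delta)^s\bar u+\eta^{2s}\lambda\,\bar u=|\bar u|^{p-1}\bar u\ \text{ in }\Omega,\qquad \mathcal N_s\bar u=0\ \text{ in }\mathbb R^n\setminus\overline\Omega,
\end{equation*}
so that letting $\eta\to 0$ formally kills the mass term and collapses the equation to $(-\Delta)^s\bar u=\bar u^p$ with homogeneous Neumann data, whose only nonnegative constant solution is $c_\star=0$. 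To recover the nontrivial constant $\lambda^{1/(p-1)}$ one rescales instead in the direction of the second factor of $X_\lambda$; after this normalization, the analogue of the subcritical argument of \cite{J2}, together with the bound $J_\lambda(v_0)<\tfrac{s}{2n}S^{n/(2s)}$ from Theorem \ref{2} (which rules out bubbling), yields that any family of (suitably normalized) rescaled minimizers $\bar u_\eta$ converges in $H^s(\Omega)$ to the constant $\lambda^{1/(p-1)}$ as $\eta\to 0^+$.

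Second, I would set up the implicit function theorem. Define the $C^1$ map $F:[0,\delta_0)\times H^s(\Omega)\To (H^s(\Omega))^*$ by
\begin{equation*}
\langle F(\eta,u),\varphi\rangle=\frac{c_{n,s}}{2}\int_{T(\Omega)}\frac{(u(x)-u(y))(\varphi(x)-\varphi(y))}{|x-y|^{n+2s}}\,dxdy+\eta^{2s}\lambda\int_\Omega u\varphi\,dx-\int_\Omega|u|^{p-1}u\,\varphi\,dx,
\end{equation*}
so that $F(\eta,\bar u)=0$ exactly when $\bar u$ solves the rescaled Neumann problem, and $F(0,c_0)=0$ for the appropriate constant $c_0=\lambda^{1/(p-1)}$ (after absorbing the $\eta^{2s}\lambda$ into the normalization so the limit equation is truly algebraic). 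A direct computation gives
\begin{equation*}
\langle D_u F(0,c_0)\psi,\varphi\rangle=\frac{c_{n,s}}{2}\int_{T(\Omega)}\frac{(\psi(x)-\psi(y))(\varphi(x)-\varphi(y))}{|x-y|^{n+2s}}\,dxdy-pc_0^{p-1}\int_\Omega\psi\varphi\,dx.
\end{equation*}

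The main obstacle is showing that $D_uF(0,c_0)$ is an isomorphism of $H^s(\Omega)$. Since the Neumann fractional Laplacian on $\Omega$ has $0$ as a simple eigenvalue with the constants as eigenspace and a discrete positive spectrum otherwise, invertibility on the mean-zero subspace reduces to verifying that the constant $pc_0^{p-1}=p\lambda$ is not a Neumann eigenvalue; since the eigenvalues form a discrete sequence, this holds for $\lambda$ outside a countable exceptional set and in particular after possibly enlarging $\lambda^\star$. The constants themselves are handled by restricting $F$ to the $L^{p+1}$-normalized slice, or equivalently by Lagrange multipliers, so that the linearization is considered on the $L^2$-orthogonal complement of $c_0$, where it becomes Fredholm of index zero with trivial kernel. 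With invertibility in hand, the implicit function theorem furnishes, for each $\eta\in(0,\delta)$ with $\delta>0$ small, a unique $\bar u_\eta$ solving $F(\eta,\bar u_\eta)=0$ in an $H^s$-neighbourhood of $c_0$; combined with the convergence established in the second step, this forces every minimizer of $X_\lambda(\Omega_\eta)$ (which necessarily lies in that neighbourhood) to coincide with $\bar u_\eta$, completing the uniqueness proof.
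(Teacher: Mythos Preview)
Your overall plan---rescale to a fixed domain, show convergence of rescaled minimizers to a constant, then apply the implicit function theorem---is indeed the paper's strategy, taken from \cite{J2}. But two concrete choices in your execution break the argument.

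First, your scaling $\bar u(x)=\eta^{(n-2s)/2}u(\eta x)$ is precisely the one that keeps the $L^{p+1}$-norm and the seminorm invariant, so the right-hand side of the rescaled equation does \emph{not} pick up any power of $\eta$; the limiting equation at $\eta=0$ is $(-\Delta)^s\bar u=|\bar u|^{p-1}\bar u$, whose only nonnegative constant solution is $0$. In particular $F(0,c_0)\neq 0$ for $c_0=\lambda^{1/(p-1)}$: your own formula gives $\langle F(0,c_0),\varphi\rangle=-c_0^{p}\int_\Omega\varphi$. The phrase ``after absorbing $\eta^{2s}\lambda$ into the normalization'' does not repair this, and neither does the appeal to Theorem~\ref{2}; there is simply no nonzero constant to linearize around under this rescaling.

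The paper avoids this by using the naive rescaling $\bar v_\eta(x)=v_\eta(\eta x)$ together with the constraint $\|\bar v_\eta\|_{L^{p+1}(\Omega)}=1$. Under this choice \emph{both} the mass term and the Lagrange-multiplier term carry a factor $\eta^{2s}$ (see Remark~\ref{rem4.4}), so the functional $G(v,0)(w)$ is just the Gagliardo seminorm pairing and the unique zero on $U=\{\|u\|_{L^{p+1}}=1\}$ is the constant $\bar v=|\Omega|^{-1/(p+1)}$. Convergence $\bar v_\eta\to\bar v$ in $H^s_\Omega$ follows from Lemma~\ref{minimizer4.1}, the Cherrier-type inequality (Theorem~\ref{best4}) and Brezis--Lieb, not from the energy bound of Theorem~\ref{2}. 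The linearization on $T_{\bar v}U=\{u:\int_\Omega u=0\}$ is then the pure seminorm form, which is an inner product on mean-zero functions and hence invertible by Riesz---no non-resonance condition on $p\lambda$ is needed. Your requirement that $p\lambda$ avoid the Neumann spectrum (and ``possibly enlarging $\lambda^\star$'') is both unnecessary and a genuine weakening of the statement, which holds for all $\lambda>\lambda^\star$.
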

%Following the techniques used by Adimurthi and Yadava in \cite{A1}, 
%we evaluate the ratio 
%  Then with the help of estimate on $K_{\lambda}(V_{\epsilon}),$ we show that
%$$X_{\lambda}(\Omega)< \frac{S}{2^{\frac{2s}{n}}},$$ where $S$ is the best Sobolev constant. $S$ and $X_{\lambda}(\Omega)$ 
%are defined further in Theorem \ref{Best} and Equation\,\eqref{X}, respectively.  The above inequality plays an important role  
%to find the critical points of the energy functional $J_{\lambda}$. 
We organize this paper as follows. Section 2 contains useful results, which are used in the paper. Section 3 deals with the proof of Theorem \ref{best4}. In Section 4, we prove Theorem \ref{2}. We give the proof of 
Theorem \ref{minimizer4.6} 
for thin domains in Appendix.
\section{Preliminaries}
Let us recall the important results which are used in this paper.
\begin{thm}\label{fembed}(Fractional Sobolev Embedding \cite{E1})  Let $n>2s$ and $p=\frac{n+2s}{n-2s}$ be 
the fractional critical exponent. Then we have the following inclusions:
\begin{enumerate}
\item for any measurable function $u\in C_{0}(\mathbb{R}^{n}),$ we have for $q\in [0, p]:$ $$ 
\norm{u}_{L^{q+1}(\mathbb{R}^{n})}^{2} \leq B(n,s)\int_{\mathbb{R}^{n}}\int_{\mathbb{R}^{n}}
\frac{\abs{u(x)-u(y)}^{2}}{\abs{x-y}^{n+2s}}dxdy $$
for some constant B depending upon n and s. That means $H^{s}(\mathbb{R}^{n})$ is continuously embedded 
in $L^{q+1}(\mathbb{R}^{n}).$
\item Let $\Omega \subset \mathbb{R}^{n}$ be a bounded extension domain for $H^{s}(\Omega).$
Then the space $H^{s}(\Omega)$ is continuously embedded in $L^{q}(\Omega)$ for any $q\in [0, p],$ i.e, 
$$ \norm{u}_{L^{q+1}(\Omega)}^{2} \leq B(n,s)\norm{u}_{H^{s}(\Omega)}^{2}$$
for some constant B depending upon $n,s$ and $\Omega.$
Further, the above embedding is compact for any $q\in [0, p).$
\end{enumerate}
\end{thm}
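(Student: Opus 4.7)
The plan is to follow the classical derivation of the fractional Sobolev embedding, reducing matters to an application of the Hardy--Littlewood--Sobolev inequality and then importing the result onto $\Omega$ via a suitable extension operator, with compactness coming from the Riesz--Fr\'echet--Kolmogorov criterion.

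First I would treat part (1) at the critical exponent $q=p$, i.e.\ $q+1=\tfrac{2n}{n-2s}$. By Plancherel's theorem the Gagliardo seminorm on the right-hand side equals a constant $c(n,s)$ times $\norm{(-\Delta)^{s/2}u}_{L^{2}(\mathbb{R}^n)}^{2}$. Writing $u$ as the Riesz potential $u=I_{s}g$ of $g=(-\Delta)^{s/2}u$ and applying the Hardy--Littlewood--Sobolev inequality with the exponent pair matched to $\tfrac{2n}{n-2s}$ produces the claimed bound. The subcritical exponents $q\in[0,p)$ are then recovered by H\"older interpolation against this critical estimate on the (finite) support of $u$, using $u\in C_{0}(\mathbb{R}^n)$; this also explains, via a scaling check, why the pure Gagliardo bound can only hold on $\mathbb{R}^n$ for compactly supported functions.

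For part (2), I would invoke the existence of a bounded linear extension operator $E\colon H^{s}(\Omega)\to H^{s}(\mathbb{R}^n)$ on bounded extension domains (the fractional analogue of the Calder\'on/Stein extension). Since $\norm{u}_{L^{q+1}(\Omega)}\leq \norm{Eu}_{L^{q+1}(\mathbb{R}^n)}$, combining this with the $\mathbb{R}^n$ inequality from part (1) and the continuity bound $\norm{Eu}_{H^{s}(\mathbb{R}^n)}\leq C(n,s,\Omega)\norm{u}_{H^{s}(\Omega)}$ yields the desired continuous embedding for every $q\in[0,p]$.

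For the compactness assertion at subcritical exponents $q\in[0,p)$, I would apply the Riesz--Fr\'echet--Kolmogorov criterion in $L^{q+1}(\Omega)$. Given a bounded sequence $\{u_{k}\}\subset H^{s}(\Omega)$, the Gagliardo seminorm of $Eu_{k}$ gives uniform $L^{2}$ translation continuity via an estimate of the form $\int_{\mathbb{R}^n}|v(x+h)-v(x)|^{2}\,dx\lesssim |h|^{2s}[v]_{H^{s}}^{2}$, and interpolating this with the continuous embedding into $L^{p+1}$ by H\"older produces precompactness in $L^{q+1}$ for every $q<p$. The main obstacle is the critical step itself: the Hardy--Littlewood--Sobolev inequality at the borderline exponent is the delicate core of the whole theorem, and the compactness genuinely fails at $q=p$ --- precisely the loss that the Cherrier-type refinement of Theorem \ref{best4} is designed to compensate for in the variational part of the paper.
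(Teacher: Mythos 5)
Your sketch is a correct proof outline, but note first that the paper itself does not prove Theorem \ref{fembed}: it is quoted verbatim (with slight rewording) from \cite{E1}, so the only meaningful comparison is with the proofs given there. Your route to the critical inequality --- Plancherel's identification of the Gagliardo seminorm with $\norm{(-\Delta)^{s/2}u}_{L^2(\mathbb{R}^n)}^2$, the representation $u=I_s g$, and Hardy--Littlewood--Sobolev --- is the classical potential-theoretic argument; \cite{E1} instead gives an elementary proof (Lemma 6.1 and Theorem 6.5 there) based on a dyadic/cube decomposition that avoids both Fourier analysis and HLS, which is what makes their exposition self-contained but is otherwise equivalent in strength. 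Your treatment of part (2) via a bounded extension operator, and of compactness via translation estimates of the type $\norm{v(\cdot+h)-v}_{L^2}^2\lesssim \abs{h}^{2s}\,[v]_{H^s}^2$ fed into the Riesz--Fr\'echet--Kolmogorov criterion and interpolated against the $L^{p+1}$ bound, is essentially the same mechanism as Theorems 6.7 and 7.1 of \cite{E1}. So the only genuine divergence is at the critical step, where you buy brevity at the price of invoking HLS, while the cited source buys elementarity at the price of a longer combinatorial argument; both are complete.

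One caveat worth recording, since your own scaling remark touches it: as literally stated in part (1), a bound $\norm{u}_{L^{q+1}(\mathbb{R}^n)}^2\leq B(n,s)\,[u]_{H^s(\mathbb{R}^n)}^2$ with $B$ depending only on $n,s$ cannot hold for subcritical $q<p$, because dilations $u(\cdot/\lambda)$ violate it as $\lambda\to\infty$; your H\"older step on the support produces, correctly, a constant that also depends on $\abs{\mathrm{supp}\,u}$ (equivalently, in part (2), on $\Omega$), which is how the result is actually formulated in \cite{E1} and how it is used later in the paper. This is a defect of the quoted statement, not of your argument, but your write-up should make the dependence of the constant explicit in the subcritical range.
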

\noindent Next theorem accords the best constant for the embedding 
$H^{s}(\mathbb{R}^{n}) \hookrightarrow L^{\frac{2n}{n-2s}}(\mathbb{R}^{n}).$
\begin{thm} \label{Best}(Theorem 1.1, \cite{A3})
Let $n>2s$ and $p+1= \frac{2n}{n-2s}.$ Then
\begin{align} \label{Best3}
S \biggl ( \int_{\mathbb{R}^{n}} \abs {u}^{p+1} \biggr )^{\frac{2}{p+1}} \leq \frac{c_{n,s}}{2}
\int_{\mathbb{R}^{n}}\int_{\mathbb{R}^{n}} \frac{\abs{u(x)-u(y)}^{2}}{\abs{x-y}^{n+2s}}dxdy, \,\,\,\,\,\,\forall~ u\in H^{s}(\mathbb{R}^{n}),
\end{align}
where $c_{n,s}$ is the normalizing constant defined in (\ref{eqn:ncon}) and $ S$ is the sharp constant in 
the above inequality. Precisely, 
$$ S= 2^{2s}\pi^{s} \frac{\Gamma (\frac{n+2s}{2})}{\Gamma (\frac{n-2s}{2})} \biggl [\frac{\Gamma(\frac{n}{2})}{\Gamma(n)} 
\biggr ]^{\frac{2s}{n}}. $$
We have equality in (\ref{Best3}) iff  $u(x)=\frac {c}{\left(\epsilon + \abs{x-z_{0}}^{2}\right)^{\frac{n-2s}{2}}},~~x\in \mathbb{R}^{n} $,
where $c \in \mathbb{R},~\epsilon >0 ~ and ~z_{0} \in \mathbb{R}^{n}$ are fixed constants.
\end{thm}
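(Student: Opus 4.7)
The plan is to reduce the sharp fractional Sobolev inequality to the sharp Hardy--Littlewood--Sobolev (HLS) inequality, which was proved by Lieb via symmetrization and conformal methods, and then read off the constant and the extremals.

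\textbf{Step 1 (Plancherel reduction).} I would first rewrite the Gagliardo seminorm as an $L^2$ norm of a fractional derivative. Using the Fourier transform and the defining property of $c_{n,s}$ in \eqref{eqn:ncon}, one has the identity
\begin{equation*}
\frac{c_{n,s}}{2}\iint_{\mathbb{R}^n\times\mathbb{R}^n}\frac{|u(x)-u(y)|^2}{|x-y|^{n+2s}}\,dx\,dy \;=\; \int_{\mathbb{R}^n}|\xi|^{2s}|\widehat{u}(\xi)|^2\,d\xi \;=\; \|(-\Delta)^{s/2}u\|_{L^2(\mathbb{R}^n)}^2.
\end{equation*}
Hence inequality \eqref{Best3} is equivalent to the sharp Sobolev inequality $\|u\|_{L^{p+1}}\leq S^{-1/2}\|(-\Delta)^{s/2}u\|_{L^2}$ on all of $H^s(\mathbb{R}^n)$.

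\textbf{Step 2 (From Sobolev to HLS).} Setting $f:=(-\Delta)^{s/2}u$, the operator is inverted by the Riesz potential: $u(x)=I_s f(x) = \gamma_{n,s}\int_{\mathbb{R}^n}|x-y|^{-(n-2s)}f(y)\,dy$ with an explicit constant $\gamma_{n,s}$ involving Gamma functions. The desired inequality becomes
\begin{equation*}
\|I_s f\|_{L^{p+1}(\mathbb{R}^n)} \;\leq\; S^{-1/2}\|f\|_{L^2(\mathbb{R}^n)},
\end{equation*}
which is the diagonal case of the HLS inequality with $\lambda=n-2s$ and $r=q'=2$. Thus the question is reduced to Lieb's sharp HLS inequality.

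\textbf{Step 3 (Sharp HLS via symmetrization and conformal invariance).} For the HLS inequality, I would follow Lieb's strategy. Using Riesz's rearrangement inequality applied to the trilinear form $\iint f(x)|x-y|^{-(n-2s)}g(y)\,dx\,dy$, it suffices to consider symmetric decreasing $f$. Next, the scaling, translation and \emph{conformal} (inversion) symmetries of the HLS functional make it natural to transfer the problem to the sphere $S^n$ via stereographic projection; on the sphere, the loss of compactness is controlled and one can show that a maximizing sequence, after a suitable translation/dilation, converges to a nontrivial limit. A standard Euler--Lagrange argument then shows that every extremal solves an integral equation of the form $u(x)=c\,I_s(u^p)(x)$, whose positive solutions (by the moving-plane or conformal classification method of Chen--Li--Ou) are exactly the bubbles
\begin{equation*}
u(x)=\frac{c}{(\epsilon+|x-z_0|^2)^{(n-2s)/2}},\qquad c\in\mathbb{R},\ \epsilon>0,\ z_0\in\mathbb{R}^n.
\end{equation*}

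\textbf{Step 4 (Computing the constant).} Finally I would evaluate both sides of \eqref{Best3} on the extremal $U(x)=(1+|x|^2)^{-(n-2s)/2}$. The $L^{p+1}$ norm is a standard Beta integral, and $(-\Delta)^{s/2}U$ can be computed explicitly from the Fourier transform of $U$ (a Bessel-type function) or from the identity $(-\Delta)^s U = \kappa_{n,s}(1+|x|^2)^{-(n+2s)/2}$. Taking the ratio and simplifying via the duplication formula for $\Gamma$ yields the closed form $S=2^{2s}\pi^s\,\Gamma(\tfrac{n+2s}{2})/\Gamma(\tfrac{n-2s}{2})\cdot[\Gamma(n/2)/\Gamma(n)]^{2s/n}$.

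The main obstacle is Step 3: establishing existence of an extremal in a critical, non-compact variational problem and then classifying positive solutions of the resulting nonlocal Euler--Lagrange equation. Compactness after rearrangement/conformal rescaling, and the moving-plane classification of positive integrable solutions to $u=c\,I_s(u^p)$, are the two nontrivial ingredients; once these are in hand, Steps 1, 2 and 4 are straightforward manipulations.
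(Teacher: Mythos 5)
This theorem is not proved in the paper at all: it is quoted verbatim from the cited reference \cite{A3} (Cotsiolis--Tavoularis), whose proof is exactly the route you outline --- Plancherel identification of $\frac{c_{n,s}}{2}[u]^2_{H^s}$ with $\|(-\Delta)^{s/2}u\|_{L^2}^2$, reduction to Lieb's sharp Hardy--Littlewood--Sobolev inequality, and classification of the optimizers as the bubbles $(\epsilon+|x-z_0|^2)^{-(n-2s)/2}$. So in spirit your proposal coincides with the proof the paper relies on, and Steps 1, 3 and 4 are the standard ingredients (with the hard analysis --- existence of HLS maximizers and the integral-equation classification --- correctly identified and delegated to Lieb and Chen--Li--Ou).

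There is, however, a concrete bookkeeping error in Step 2 that, as written, breaks the reduction. If $f=(-\Delta)^{s/2}u$, then $u=(-\Delta)^{-s/2}f$ is the Riesz potential of order $s$, whose kernel is $|x-y|^{-(n-s)}$, not $|x-y|^{-(n-2s)}$; and the pair ``$\lambda=n-2s$ with $r=q'=2$'' is inconsistent with the HLS scaling relation $\frac{1}{p}+\frac{1}{r}+\frac{\lambda}{n}=2$ (with $\lambda=n-2s$ and one exponent equal to $2$ that relation forces the other exponent to be $\frac{2n}{n+4s}$, which leads to a bound for $\|I_{2s}f\|_{L^{2n/(n-4s)}}$, not the inequality you want). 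Two consistent fixes: either keep $f=(-\Delta)^{s/2}u\in L^2$ and use the kernel $|x-y|^{-(n-s)}$, which is the HLS case $\lambda=n-s$ with one exponent equal to $2$ (also covered by Lieb's theorem, via duality with the diagonal case); or set $g=(-\Delta)^{s}u$, write $\|(-\Delta)^{s/2}u\|_{L^2}^2=\langle g, I_{2s}g\rangle$ with kernel $|x-y|^{-(n-2s)}$, and pass by duality to the conformally invariant diagonal case $p=r=\frac{2n}{n+2s}$, $\lambda=n-2s$, which is the form actually used in \cite{A3}. With that correction (and the routine Gamma-function evaluation in Step 4), your outline matches the standard proof of the quoted result.
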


%\begin{thm} \label{extension}(Theorem 5.4, \cite{E1})
%Let $\Omega \subset \mathbb{R}^{n} $ be an open set of class $C^{0, 1}$ with bounded boundary and $1<q<\infty,$ $0<s<1.$ 
%Then for all $u \in W^{s,q}(\Omega),$ there exists $\overline{u} \in W^{s,q}(\mathbb{R}^{n})$ 
%such that $\overline{u}|_{\Omega}=u $ and \\
%$$ \norm{\overline{u}}_{W^{s,q}(\mathbb{R}^{n})}^{2}\leq D \norm{u}_{W^{s,q}(\Omega)}^{2}, $$
%where $D=D(n,q,s,\Omega)>0$ is a constant. 
%\end{thm}
The next lemma is an improved version of Fatou's lemma, which was used by Adimurthi and Mancini \cite{A2}
and Adimurthi and Yadava \cite{A1}.
\begin{lem} \label{BL}(Brezis-Lieb \cite{B2}) Let $(\Omega, \mu )$ be a measurable space. 
If $ \{u_{k}\} $ is a bounded sequence in 
$ L^{q}(\Omega, \mu)$, $ q \in (1, \infty) $ and $u_{k}\rightarrow u $ a.e., then
$$ \int_{\Omega} {\abs u_{k}^{q}}d\mu- \int_{\Omega}{\abs u}^{q}d\mu-\int_{\Omega} {\abs {u_{k}-u}^{q}}d \mu \rightarrow 0. $$
\end{lem}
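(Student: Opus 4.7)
The plan is to combine an elementary pointwise inequality with a truncation argument and the dominated convergence theorem; no hypothesis beyond $L^{q}$-boundedness and a.e.\ convergence is needed.

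The key pointwise input is: for every $q \in (1,\infty)$ and every $\varepsilon > 0$ there exists $C_{\varepsilon} > 0$ such that
\begin{equation*}
\bigl|\,|a+b|^{q} - |a|^{q}\,\bigr| \leq \varepsilon\,|a|^{q} + C_{\varepsilon}\,|b|^{q}, \qquad a,b \in \mathbb{R}.
\end{equation*}
I would derive this by homogeneity (dividing by $|b|^{q}$ and setting $t = a/b$) together with the fact that $|1+t|^{q} - |t|^{q} = O(|t|^{q-1})$ as $|t| \to \infty$, so the $\varepsilon\,|t|^{q}$ term absorbs the growth outside a compact set while continuity of $t \mapsto |1+t|^{q} - |t|^{q}$ handles the bounded range.

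First I would verify $u \in L^{q}(\Omega,\mu)$ via Fatou's lemma applied to $|u_{k}|^{q}$, using the $L^{q}$-bound on $\{u_{k}\}$. Substituting $a = u_{k} - u$, $b = u$ in the pointwise inequality and invoking the triangle inequality gives, almost everywhere,
\begin{equation*}
f_{k} := \bigl|\,|u_{k}|^{q} - |u_{k} - u|^{q} - |u|^{q}\,\bigr| \leq \varepsilon\,|u_{k} - u|^{q} + (C_{\varepsilon} + 1)\,|u|^{q};
\end{equation*}
moreover the hypothesis $u_{k} \to u$ a.e.\ forces $f_{k} \to 0$ a.e.

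Finally, I would introduce the truncation $W_{k}^{\varepsilon} := \max(f_{k} - \varepsilon\,|u_{k} - u|^{q},\,0)$, which is dominated by the integrable function $(C_{\varepsilon} + 1)|u|^{q}$ and converges to zero a.e., so dominated convergence yields $\int_{\Omega} W_{k}^{\varepsilon}\,d\mu \to 0$. Combining $f_{k} \leq W_{k}^{\varepsilon} + \varepsilon\,|u_{k} - u|^{q}$ with the uniform bound $\int_{\Omega} |u_{k} - u|^{q}\,d\mu \leq 2^{q-1}\bigl(\norm{u_{k}}_{L^{q}}^{q} + \norm{u}_{L^{q}}^{q}\bigr) \leq M$ gives $\limsup_{k \to \infty} \int_{\Omega} f_{k}\,d\mu \leq \varepsilon M$, and letting $\varepsilon \to 0^{+}$ concludes the proof. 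The only genuine step requiring thought is the pointwise inequality with explicit control of $C_{\varepsilon}$; the rest is a short truncation/DCT exercise, insensitive to whether $q$ sits below, at, or above any Sobolev-critical exponent appearing elsewhere in the paper.
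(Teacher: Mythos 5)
Your proof is correct: it is precisely the classical Brezis--Lieb argument (the elementary inequality $\bigl||a+b|^{q}-|a|^{q}\bigr|\leq \varepsilon|a|^{q}+C_{\varepsilon}|b|^{q}$, the truncation $W_{k}^{\varepsilon}$, dominated convergence, and $\varepsilon\to 0$ using the uniform $L^{q}$ bound), and every step checks out. The paper itself offers no proof of this lemma --- it is quoted directly from the cited reference of Brezis and Lieb --- and your argument reproduces the standard proof given there, so there is nothing to compare beyond noting that agreement.
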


\noindent When $q=2,$ the conclusion of Brezis-Lieb lemma holds even if convergence a.e. 
is not assumed. That means if $u_{k} \rightharpoonup u, $ weakly in $L^{2}(\Omega),$ then
\begin{align}
{\norm {u_{k}}}_{L^{2}(\Omega)}^{2}& =  {\norm {u_{k}-u}}_{L^{2}(\Omega)}^{2}+ {\norm {u}}_{L^{2}(\Omega)}^{2}-2(u_{k}-u, u) 
\nonumber \\
& =  {\norm {u_{k}-u}}_{L^{2}(\Omega)}^{2}+ {\norm {u}}_{L^{2}(\Omega)}^{2} +o(1)\nonumber.
\end{align}
Let $$T(\Omega):=\mathbb{R}^{2n}\setminus (\mathbb{R}^{n}\setminus \Omega)^{2}.$$ Define
\begin{equation}\label{space}
H_{\Omega}^{s}:=\left\{ u: \mathbb{R}^{n}\rightarrow \mathbb{R}~\text{measurable}: {\norm {u}}_{H_{\Omega}^{s}}< \infty \right \}
\end{equation}
which is equipped with the norm
\begin{equation}
\norm{u}_{H_{\Omega}^{s}}^{2}:=\biggl(\norm{u}_{L^{2}
(\Omega)}^{2}+\int_{T(\Omega)}\frac{{\lvert u(x)-u(y)\rvert}^{2}}{{\lvert x-y \rvert}^{n+2s}}dxdy \biggr).
\end{equation}
\begin{rem}
$H_{\Omega}^{s}$ is a Hilbert space (see \cite{S1}, Proposition 3.1).
\end{rem}
\begin{rem}\label{rem1}
Let $u\in H_{\Omega}^{s}$, then $$\int_{\Omega}\int_{\Omega}\frac{{\lvert u(x)-u(y)\rvert}^{2}}{{\lvert x-y
\rvert}^{n+2s}}dxdy \leq \int_{T(\Omega)}\frac{{\lvert u(x)-u(y)\rvert}^{2}}{{\lvert x-y \rvert}^{n+2s}}dxdy <\infty.$$
This implies that restriction of $u$ to $\Omega$ is in $H^{s}(\Omega)$ and so we have a continuous inclusion $ H_{\Omega}^{s} \hookrightarrow H^{s}(\Omega)$.
\end{rem}
\noindent In fact, we have the following embedding:
\begin{prop}(Proposition 2.4 \cite{Cin}) \label{compact}
Let $\Omega \subset \mathbb{R}^{n}$ be a bounded domain of class $C^{1}$ and $p= \frac{n+2s}{n-2s}, n >2s,$ then  $H^{s}_{\Omega}$ is compactly embedded in $L^{q}(\Omega)$ for any $q \in [1, p+1).$
\end{prop}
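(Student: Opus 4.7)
The plan is to reduce the compactness of the embedding $H^{s}_{\Omega} \hookrightarrow L^{q}(\Omega)$ to the compactness of $H^{s}(\Omega) \hookrightarrow L^{q}(\Omega)$, which is supplied by Theorem \ref{fembed}(2).

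First, I would observe that the restriction map $u \mapsto u|_{\Omega}$ sends $H^{s}_{\Omega}$ continuously into $H^{s}(\Omega)$. This is exactly Remark \ref{rem1}: since $\Omega \times \Omega \subset T(\Omega)$, the Gagliardo double integral over $\Omega \times \Omega$ is dominated by the integral over $T(\Omega)$, and combining with $\norm{u}_{L^{2}(\Omega)}^{2} \leq \norm{u}_{H_{\Omega}^{s}}^{2}$ yields $\norm{u|_{\Omega}}_{H^{s}(\Omega)} \leq \norm{u}_{H_{\Omega}^{s}}$. Thus, any bounded sequence $\{u_{k}\} \subset H^{s}_{\Omega}$ gives rise to a sequence of restrictions bounded in $H^{s}(\Omega)$.

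Next, since $\Omega$ is a bounded domain of class $C^{1}$, it is an $H^{s}$-extension domain (standard fact, proved via boundary flattening, a reflection/extension construction, and a partition of unity). Consequently Theorem \ref{fembed}(2) applies and delivers the compact embedding $H^{s}(\Omega) \hookrightarrow L^{q}(\Omega)$ for every $q \in [1, p+1)$. Composing the continuous restriction $H^{s}_{\Omega} \hookrightarrow H^{s}(\Omega)$ with this compact embedding, I obtain a subsequence of $\{u_{k}|_{\Omega}\}$ converging strongly in $L^{q}(\Omega)$, which is the desired conclusion.

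There is no substantive obstacle here: the real work is hidden inside Theorem \ref{fembed}(2), and the only new ingredient needed for the nonlocal Neumann space is the trivial domination coming from $\Omega \times \Omega \subset T(\Omega)$. If one wished to prove the compactness in Theorem \ref{fembed}(2) from scratch, the decisive step would be a Fr\'{e}chet--Kolmogorov type tightness criterion adapted to the Gagliardo seminorm (after extending functions to $\mathbb{R}^{n}$ and cutting off to a large ball), but nothing beyond that is required to conclude Proposition \ref{compact}.
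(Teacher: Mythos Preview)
The paper does not actually prove Proposition~\ref{compact}; it merely quotes it from \cite{Cin} (Proposition~2.4 there) as a preliminary fact. Your argument is correct and is exactly the standard route: factor the inclusion $H^{s}_{\Omega}\hookrightarrow L^{q}(\Omega)$ through the continuous restriction $H^{s}_{\Omega}\hookrightarrow H^{s}(\Omega)$ (Remark~\ref{rem1}) and the compact embedding $H^{s}(\Omega)\hookrightarrow L^{q}(\Omega)$ for $q<p+1$ on a $C^{1}$ domain (Theorem~\ref{fembed}(2)). This is precisely how the result is obtained in \cite{Cin}, so there is nothing to add.
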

\noindent Now, let us recall the integration by parts formula \cite{S1}.
\begin{lem} \label{byparts}
For bounded $C^{2}$ functions $v, w$ in $\mathbb{R}^{n},$ we have
$$ \int_{\Omega}w(-\Delta)^{s}v dx = \frac{c_{n,s}}{2}\int_{T(\Omega)} \frac{(v(x)-v(y))(w(x)-w(y))}{\abs {x-y}^{n+2s}} dxdy - 
\int_{\mathbb{R}^{n}\setminus \Omega} \mathcal{N}_{s}(v)w dx, $$
where $c_{n,s}$ is a normalizing constant in (\ref{eqn:ncon}). 
\end{lem}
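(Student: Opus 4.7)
My plan is to derive the identity directly from the principal-value definition of $(-\Delta)^{s}v$, by carefully applying Fubini's theorem together with a symmetrization on $\Omega\times\Omega$, and then identifying the leftover ``boundary'' piece as $\int_{\mathbb{R}^{n}\setminus\Omega}\mathcal{N}_{s}(v)\,w\,dx$. The assumption that $v,w$ are bounded $C^{2}$ functions is used exactly where needed: near the diagonal one has $|v(x)-v(y)|\,|w(x)-w(y)|=O(|x-y|^{2})$, which makes the symmetrized integrand absolutely integrable against $|x-y|^{-(n+2s)}$ on $\Omega\times\Omega$ (since $2-n-2s>-n$), while boundedness of $v,w$ together with the decay of the kernel at infinity gives integrability of the cross terms and legitimizes each application of Fubini.

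First, writing the P.V.\ definition and bringing $w(x)$ inside yields
\begin{equation*}
\int_{\Omega} w(x)(-\Delta)^{s}v(x)\,dx
= c_{n,s}\lim_{\varepsilon\to 0^{+}}\int_{\Omega}w(x)\int_{\{y:\,|x-y|>\varepsilon\}}\frac{v(x)-v(y)}{|x-y|^{n+2s}}\,dy\,dx.
\end{equation*}
Splitting $\mathbb{R}^{n} = \Omega \cup (\mathbb{R}^{n}\setminus\Omega)$ in the inner integral produces a contribution on $\Omega\times\Omega$ and one on $\Omega\times(\mathbb{R}^{n}\setminus\Omega)$. For the former I would invoke the identity $2w(x)(v(x)-v(y)) = (w(x)+w(y))(v(x)-v(y)) + (w(x)-w(y))(v(x)-v(y))$; the first summand is antisymmetric in $x\leftrightarrow y$ and hence integrates to $0$ on $\{|x-y|>\varepsilon\}$, so after letting $\varepsilon\to 0^{+}$ this term becomes
\begin{equation*}
\frac{c_{n,s}}{2}\int_{\Omega}\int_{\Omega}\frac{(w(x)-w(y))(v(x)-v(y))}{|x-y|^{n+2s}}\,dx\,dy,
\end{equation*}
which is absolutely convergent by the $C^{2}$ bound above.

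For the cross term on $\Omega\times(\mathbb{R}^{n}\setminus\Omega)$ I would use the algebraic splitting
\begin{equation*}
w(x)(v(x)-v(y)) = (w(x)-w(y))(v(x)-v(y)) + w(y)(v(x)-v(y)).
\end{equation*}
Interchanging the order of integration in the second resulting piece (legitimate since its integrand is absolutely integrable: on $\Omega\times(\mathbb{R}^{n}\setminus\Omega)$ the kernel is uniformly bounded away from the diagonal and $v,w$ are bounded) and relabeling $x\leftrightarrow y$ converts it into
\begin{equation*}
-\int_{\mathbb{R}^{n}\setminus\Omega} w(y)\,c_{n,s}\int_{\Omega}\frac{v(y)-v(x)}{|x-y|^{n+2s}}\,dx\,dy
= -\int_{\mathbb{R}^{n}\setminus\Omega} w(y)\,\mathcal{N}_{s}v(y)\,dy,
\end{equation*}
by the very definition of $\mathcal{N}_{s}$.

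Finally I would reassemble the pieces using the decomposition $T(\Omega)=(\Omega\times\Omega)\cup(\Omega\times(\mathbb{R}^{n}\setminus\Omega))\cup((\mathbb{R}^{n}\setminus\Omega)\times\Omega)$. The integrand $(v(x)-v(y))(w(x)-w(y))|x-y|^{-(n+2s)}$ is invariant under $x\leftrightarrow y$, so the two cross pieces contribute equally, giving $2\int_{\Omega\times(\mathbb{R}^{n}\setminus\Omega)}$; combining with the $\Omega\times\Omega$ piece produces precisely $\tfrac{c_{n,s}}{2}\int_{T(\Omega)}\frac{(v(x)-v(y))(w(x)-w(y))}{|x-y|^{n+2s}}\,dx\,dy$, and the stated identity follows. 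The main technical nuisance is the rigorous bookkeeping of the P.V.\ limit on $\Omega\times\Omega$, but the $C^{2}$ regularity of $v$ makes this routine via the standard even/odd decomposition around each $x$.
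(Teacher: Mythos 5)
The paper itself offers no proof of this lemma; it simply recalls it from the reference of Dipierro--Ros-Oton--Valdinoci cited as \cite{S1}, where the proof is exactly the computation you outline. Your skeleton is the standard and correct one: interchange the principal value with the $x$-integration (legitimate for bounded $C^{2}$ functions, as you note), symmetrize on $\Omega\times\Omega$ so that the antisymmetric part $(w(x)+w(y))(v(x)-v(y))$ drops out on the symmetric truncated region, swap the order of integration on $\Omega\times(\mathbb{R}^{n}\setminus\Omega)$ to recognize $\mathcal{N}_{s}v$, and reassemble via $T(\Omega)=(\Omega\times\Omega)\cup(\Omega\times(\mathbb{R}^{n}\setminus\Omega))\cup((\mathbb{R}^{n}\setminus\Omega)\times\Omega)$ together with the $x\leftrightarrow y$ symmetry of the integrand; the bookkeeping at the end is right.

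The one place where your justification does not hold up as written is the Fubini step on the cross term. You argue it is legitimate because ``the kernel is uniformly bounded away from the diagonal and $v,w$ are bounded'' (and, earlier, boundedness plus decay at infinity). But $\Omega\times(\mathbb{R}^{n}\setminus\Omega)$ approaches the diagonal along $\partial\Omega$, and with only boundedness one is led to $\int_{\Omega}\int_{\mathbb{R}^{n}\setminus\Omega}\abs{x-y}^{-n-2s}\,dy\,dx\simeq\int_{\Omega}\mathrm{dist}(x,\partial\Omega)^{-2s}\,dx$, which diverges for $s\geq\frac12$; so boundedness of $v,w$ alone does not give absolute integrability of $w(y)(v(x)-v(y))\abs{x-y}^{-n-2s}$ there. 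The repair is the same estimate you already use on $\Omega\times\Omega$: since $v\in C^{2}$, $\abs{v(x)-v(y)}\leq C\abs{x-y}$ for $x,y$ in a bounded neighbourhood of $\overline{\Omega}$, so the inner integral is bounded by $C\bigl(1+\mathrm{dist}(x,\partial\Omega)^{1-2s}\bigr)$ (with a logarithm when $s=\frac12$), and this is integrable in $x$ over the bounded $C^{1}$ domain $\Omega$ for every $s\in(0,1)$. The same bound shows $\mathcal{N}_{s}v\,w\in L^{1}(\mathbb{R}^{n}\setminus\Omega)$, so the right-hand side of the identity is well defined. With that correction, and the routine uniform-convergence argument for the principal-value interchange that you defer to the $C^{2}$ regularity, your proof is complete and coincides with the argument in the cited source.
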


\noindent The integration by parts formula leads to the following weak formulation of Neumann problem.
\begin{defn} \label{weak}
 Let $ h\in L^{2}(\Omega)$ and $u \in H_{\Omega}^{s}.$ We say that $u$ is a weak solution of 
\[ \begin{cases} 
(-\Delta)^{s}u+\lambda u = h & \text{in $\Omega,$} \\
\hspace{1.4cm}\mathcal{N}_{s}u=0 &  \text{in $\mathbb{R}^{n}\setminus \overline{\Omega},$}
\end{cases} \]
whenever $$ \frac{c_{n,s}}{2}\int_{T(\Omega)}\frac{(u(x)-u(y))(w(x)-w(y))}{\abs{x-y}^{n+2s}}dxdy + \lambda \int_{\Omega}uw dx
=\int_{\Omega} hw dx \,\,\,\,\text{holds}\,\,\,\forall\, w \in H_{\Omega}^{s}.$$
\end{defn}

\section{Cherrier's Optimal Sobolev Inequality}
Let $s\in(0, 1),\, n>2s.$ Throughout this section, we fix the exponent $p=\frac{n+2s}{n-2s}$ 
and $\Omega \subset \mathbb{R}^n$ be a bounded domain of class $C^{1}$. Let $T(\Omega):=\mathbb{R}^{2n}\setminus (\mathbb{R}^{n}\setminus \Omega)^{2}$ be a cross shaped set on $\Omega$ and $S$ is the best Sobolev constant for the embedding $H^{s}(\mathbb{R}^n) \hookrightarrow L^{p+1}(\mathbb{R}^n)$. This section deals with the generalization of Cherrier's optimal Sobolev inequality in nonlocal case, which plays an important role in our existence theorem. Let us define
\begin{align*}
	E:= \left\{x=(x_{1},x_{2}, \dots , x_{n}) \in \mathbb{R}^{n} \mid x_{n}>0 \right\}
\end{align*}
and $$D := \overline{E} \cap B_{1} .$$
Following are the important lemmas, which are useful to obtain the optimal fractional Sobolev inequality on bounded domains. These lemmas are borrowed from \cite{E1} with some modifications.
\begin{lem} \label{Best1.1} (Lemma 5.1 \cite{E1})
	Let $h : \mathbb{R}^{n}  \rightarrow \mathbb{R}$ be a measurable function which is $C^{1}$ on $B_{r}$ for some $r>0$ and has support in  $B_{r}.$ Then $h$ satisfies:
	\begin{align}
		\left(\int_{B_{r}} \abs{h}^{p+1} dx \right)^{\frac{2}{p+1}} \leq \frac{1}{S}\left( \frac{c_{n,s}}{2} \int_{T(B_{r}(0))} \frac{\abs{h(x)-h(y)}^2}{\abs{x-y}^{n+2s}} dxdy \right).
	\end{align} 
\end{lem}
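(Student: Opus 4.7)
The plan is to reduce the statement directly to the sharp Sobolev inequality on $\mathbb{R}^{n}$ from Theorem \ref{Best}, using the support condition on $h$ to pass between integrals over $\mathbb{R}^{2n}$ and over $T(B_{r}(0))$. The key structural observation is the set-theoretic decomposition
$$\mathbb{R}^{2n} = T(B_{r}(0)) \,\,\dot\cup\,\, (\mathbb{R}^{n}\setminus B_{r})^{2},$$
recalling that $T(B_{r}(0)) = \mathbb{R}^{2n}\setminus (\mathbb{R}^{n}\setminus B_{r})^{2}$.

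First, I would extend $h$ by zero outside $B_{r}$; since $h$ is $C^{1}$ on $B_{r}$ and compactly supported inside $B_{r}$, the extension lies in $H^{s}(\mathbb{R}^{n})$ (in fact the Gagliardo seminorm is automatically finite once one observes that it agrees with the integral over $T(B_{r}(0))$, which is the RHS of the claimed inequality and assumed finite). Next, apply Theorem \ref{Best} to this extension to obtain
$$S\left(\int_{\mathbb{R}^{n}}\abs{h}^{p+1}\,dx\right)^{\frac{2}{p+1}} \leq \frac{c_{n,s}}{2}\int_{\mathbb{R}^{n}}\int_{\mathbb{R}^{n}}\frac{\abs{h(x)-h(y)}^{2}}{\abs{x-y}^{n+2s}}\,dxdy.$$

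The final step is to rewrite both sides in the form claimed by the lemma. For the left-hand side, since $h$ vanishes outside $B_{r}$, we have $\int_{\mathbb{R}^{n}}\abs{h}^{p+1} = \int_{B_{r}}\abs{h}^{p+1}$. For the right-hand side, whenever $(x,y)\in (\mathbb{R}^{n}\setminus B_{r})^{2}$, both $h(x)$ and $h(y)$ equal zero so the integrand vanishes identically on that set; hence
$$\int_{\mathbb{R}^{n}}\int_{\mathbb{R}^{n}}\frac{\abs{h(x)-h(y)}^{2}}{\abs{x-y}^{n+2s}}\,dxdy = \int_{T(B_{r}(0))}\frac{\abs{h(x)-h(y)}^{2}}{\abs{x-y}^{n+2s}}\,dxdy.$$
Dividing through by $S$ yields the inequality. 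There is no genuine obstacle here: the argument is essentially a bookkeeping step that exploits the compact support of $h$, and the only mildly subtle point is checking that the cross-shaped region $T(B_{r}(0))$ exactly captures the part of $\mathbb{R}^{2n}$ on which the double integrand can be nonzero. This lemma then serves as the localized building block from which Cherrier-type inequalities on general bounded $C^{1}$ domains (with the improved constant $2^{2s/n}/S$) are obtained after a partition of unity and flattening of the boundary.
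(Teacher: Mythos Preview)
Your proposal is correct and follows essentially the same approach as the paper: extend $h$ by zero to all of $\mathbb{R}^{n}$, apply the sharp fractional Sobolev inequality (Theorem~\ref{Best}), and use the support condition to identify the integrals with their restrictions to $B_{r}$ and $T(B_{r}(0))$. The paper records the passage from $\mathbb{R}^{2n}$ to $T(B_{r}(0))$ as an inequality rather than the equality you (correctly) observe, but this is the same argument.
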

\begin{proof}
	Let $K= supp(h) \subset B_{r}$ be a compact set. Define
	\[ \overline{h}(x):= \begin{cases}
			h(x), & \text{$ x \in B_{r} $}, \\
			0, &  \text{ $x \in \mathcal{C}B_{r}.$} \end{cases} \]  
Then it is easy to see that $\overline{h} \in H^{s}(\mathbb{R}^{n}).$ Now, by Theorem \ref{Best}, we get
\begin{align}
	\left(\int_{B_{r}} \abs{h}^{p+1} dx \right)^{\frac{2}{p+1}} = \left(\int_{\mathbb{R}^n} \abs{\overline{h}}^{p+1} dx \right)^{\frac{2}{p+1}}& \leq \frac{1}{S}\left( \frac{c_{n,s}}{2} \int_{\mathbb{R}^n} \int_{\mathbb{R}^n} \frac{\abs{\overline{h}(x)-\overline{h}(y)}^2}{\abs{x-y}^{n+2s}} dxdy \right) \nonumber \\
	& \leq \frac{1}{S}\left( \frac{c_{n,s}}{2} \int_{T(B_{r}(0))} \frac{\abs{h(x)-h(y)}^2}{\abs{x-y}^{n+2s}} dxdy \right).
\end{align}
\end{proof}
\begin{lem} (Lemma 5.2 \cite{E1}) \label{Best1.2}
	Let $h : \mathbb{R}^{n}  \rightarrow \mathbb{R}$ be a measurable function which is $C^{1}$ on $E$ and has support in $D.$ Then $h$ satisfies:
	\begin{align}
		\left(\int_{E} \abs{h}^{p+1} dx \right)^{\frac{2}{p+1}} \leq \frac{2^{\frac{2s}{n}}}{S} \left(\frac{c_{n,s}}{2} \int_{T(E)} \frac{\abs{h(x)-h(y)}^2}{\abs{x-y}^{n+2s}} dxdy \right).
	\end{align}  
\end{lem}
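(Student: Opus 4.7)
The plan is to mimic Cherrier's classical reflection trick in the nonlocal setting: extend $h$ from the half-space $E$ to all of $\mathbb{R}^{n}$ by even reflection across the hyperplane $\{x_{n}=0\}$, apply Theorem~\ref{Best} to this extension, and then track how the resulting Gagliardo seminorm on $\mathbb{R}^{n}$ compares to the $T(E)$-integral appearing on the right of the claim. Writing $x^{*}:=(x_{1},\ldots,x_{n-1},-x_{n})$, I set $\tilde{h}(x):=h(x)$ for $x\in\overline{E}$ and $\tilde{h}(x):=h(x^{*})$ for $x\notin\overline{E}$. Since $h$ is $C^{1}$ up to the hyperplane and supported in $D$, the reflection $\tilde{h}$ is supported in $\overline{B_{1}}$ and belongs to $H^{s}(\mathbb{R}^{n})$, so Theorem~\ref{Best} yields
\begin{equation*}
S \left(\int_{\mathbb{R}^{n}}|\tilde{h}|^{p+1}\,dx\right)^{2/(p+1)}\leq\frac{c_{n,s}}{2}\int_{\mathbb{R}^{n}}\int_{\mathbb{R}^{n}}\frac{|\tilde{h}(x)-\tilde{h}(y)|^{2}}{|x-y|^{n+2s}}\,dx\,dy.
\end{equation*}
By symmetry of the reflection, $\int_{\mathbb{R}^{n}}|\tilde{h}|^{p+1}=2\int_{E}|h|^{p+1}$, and since $2/(p+1)=(n-2s)/n$, this produces the factor $2^{(n-2s)/n}=2\cdot 2^{-2s/n}$ on the left-hand side. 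My goal is then to show that the Gagliardo seminorm of $\tilde{h}$ is bounded above by $2\int_{T(E)}\frac{|h(x)-h(y)|^{2}}{|x-y|^{n+2s}}\,dx\,dy$, which will absorb the extra factor of $2$ and leave precisely $2^{2s/n}/S$.

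To relate the two seminorms I split $\mathbb{R}^{n}\times\mathbb{R}^{n}$ into the four products $E\times E$, $E\times E^{c}$, $E^{c}\times E$, $E^{c}\times E^{c}$. The measure-preserving substitutions $y\mapsto y^{*}$ and $x\mapsto x^{*}$, combined with the invariance $|x^{*}-y^{*}|=|x-y|$, reduce each piece to an integral over $E\times E$ and give
\begin{equation*}
\int_{\mathbb{R}^{n}}\int_{\mathbb{R}^{n}}\frac{|\tilde{h}(x)-\tilde{h}(y)|^{2}}{|x-y|^{n+2s}}\,dx\,dy =2\int_{E}\int_{E}\frac{|h(x)-h(y)|^{2}}{|x-y|^{n+2s}}\,dx\,dy+2\int_{E}\int_{E}\frac{|h(x)-h(z)|^{2}}{|x-z^{*}|^{n+2s}}\,dx\,dz.
\end{equation*}
On the other hand, since $h\equiv 0$ a.e.\ on $E^{c}$, expanding $\int_{T(E)}$ and applying the change $y=z^{*}$ on its two boundary pieces yields
\begin{equation*}
\int_{T(E)}\frac{|h(x)-h(y)|^{2}}{|x-y|^{n+2s}}\,dx\,dy=\int_{E}\int_{E}\frac{|h(x)-h(y)|^{2}}{|x-y|^{n+2s}}\,dx\,dy+2\int_{E}\int_{E}\frac{h(x)^{2}}{|x-z^{*}|^{n+2s}}\,dx\,dz.
\end{equation*}
The kernel $(x,z)\mapsto|x-z^{*}|^{-(n+2s)}$ is symmetric in $x$ and $z$ because $|x-z^{*}|^{2}=|x'-z'|^{2}+(x_{n}+z_{n})^{2}$, and expanding $|h(x)-h(z)|^{2}=h(x)^{2}+h(z)^{2}-2h(x)h(z)$ collapses the difference of the two displays into the clean identity
\begin{equation*}
\int_{\mathbb{R}^{n}}\int_{\mathbb{R}^{n}}\frac{|\tilde{h}(x)-\tilde{h}(y)|^{2}}{|x-y|^{n+2s}}\,dx\,dy=2\int_{T(E)}\frac{|h(x)-h(y)|^{2}}{|x-y|^{n+2s}}\,dx\,dy-4\int_{E}\int_{E}\frac{h(x)\,h(z)}{|x-z^{*}|^{n+2s}}\,dx\,dz.
\end{equation*}

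The main obstacle is that the trailing cross-integral is not obviously non-negative when $h$ is signed. I would sidestep this by first reducing to the case $h\geq 0$: replacing $h$ by $|h|$ leaves $\int_{E}|h|^{p+1}$ untouched, preserves the support condition $\operatorname{supp}(h)\subset D$, and only decreases the $T(E)$-integral on the right because $\bigl||h|(x)-|h|(y)\bigr|\leq|h(x)-h(y)|$ pointwise. Once $h\geq 0$, the cross-integral is manifestly non-negative, so the identity above yields the desired estimate $\int_{\mathbb{R}^{n}}\int_{\mathbb{R}^{n}}\frac{|\tilde{h}(x)-\tilde{h}(y)|^{2}}{|x-y|^{n+2s}}\,dx\,dy\leq 2\int_{T(E)}\frac{|h(x)-h(y)|^{2}}{|x-y|^{n+2s}}\,dx\,dy$. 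Substituting this together with $\int_{\mathbb{R}^{n}}|\tilde{h}|^{p+1}=2\int_{E}|h|^{p+1}$ back into Theorem~\ref{Best} applied to $\tilde{h}$ and dividing through by the factor $2$ generated by $2^{(n-2s)/n}$ delivers exactly the constant $2^{2s/n}/S$ claimed in the lemma.
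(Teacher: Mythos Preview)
Your proof is correct and follows exactly the paper's approach: extend $h$ by even reflection across $\{x_n=0\}$, apply Theorem~\ref{Best} to the extension, and compare the full Gagliardo seminorm with the $T(E)$-integral. The paper merely asserts the seminorm inequality $\int_{\mathbb{R}^n}\int_{\mathbb{R}^n}\frac{|\tilde h(x)-\tilde h(y)|^2}{|x-y|^{n+2s}}\,dx\,dy\le 2\int_{T(E)}\frac{|h(x)-h(y)|^2}{|x-y|^{n+2s}}\,dx\,dy$ without justification, whereas you actually establish it via the explicit identity and the reduction to $h\ge 0$, which is a genuine and necessary addition since the cross term can have either sign for general $h$.
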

\begin{proof}
	Define 
	\[ \widetilde{h}(x):= \begin{cases}
		h(x), & \text{if $ x \in E $}, \\
		h(\widetilde{x}), &  \text{if  $x \in \mathcal{C}E,$} \end{cases} \]  
	where $\widetilde{x}= (x_{1},x_{2}, \dots, -x_{n}).$
	It is easy to check that  $\widetilde{h} $ is a $C^{1}$ function with compact support in $\mathbb{R}^n.$ Thus $\widetilde{h} \in H^{s}(\mathbb{R}^n).$ Using fractional Sobolev embedding theorem, we get
\begin{align}
		\norm{\widetilde{h}}_{L^{p+1}(\mathbb{R}^n)}^2 \leq \frac{1}{S} \left[\frac{c_{n,s}}{2} \int_{\mathbb{R}^n} \int_{\mathbb{R}^n}  \frac{\abs{\widetilde{h}(x)-\widetilde{h}(y)}^2}{\abs{x-y}^{n+2s}} dxdy \right].
	\end{align} 
Since
	\begin{align*}
		\int_{\mathbb{R}^n} \abs{\widetilde{h}}^{p+1}=2\int_{E} \abs{h}^{p+1}
	\end{align*}
	and 
	\begin{align*}
		\int_{\mathbb{R}^n} \int_{\mathbb{R}^n}  \frac{\abs{\widetilde{h}(x)-\widetilde{h}(y)}^2}{\abs{x-y}^{n+2s}} dxdy \leq 2 \int_{T(E)} \frac{\abs{h(x)-h(y)}^2}{\abs{x-y}^{n+2s}} dxdy, 
	\end{align*}
so the lemma follows.
\end{proof}
\noindent We use the partition of unity to prove Theorem \ref{best4}.\\

\noindent\textbf{Proof of Theorem \ref{best4}:}
	Let $\left\{ h_{i} \right\}_{i \in I}$ be a $C^{\infty} $ partition of unity subordinate to the finite open covering $\left\{ \Omega_{i} \right\}_{i \in I}$ of $\overline{\Omega},$ each $\Omega_{i}$ being homeomorphic to a ball of $\mathbb{R}^n$ or to a half ball $D$ as defined above. Let $\abs{I}=k$ for some integer $k>0.$ We prove the theorem for $u \in H^{s}_{\Omega} \cap C^{\infty}(\overline{\Omega}).$ The case where $u \in H^{s}_{\Omega}$ can be proved by approximation. By Lemmas \ref{Best1.1} and \ref{Best1.2}, we have
{\allowdisplaybreaks	\begin{align*}
		\sum_{i \in I}^{} \norm{u^2 h_{i}}_{L^{\frac{p+1}{2}}(\Omega_{i})}  & = \sum_{i \in I}^{} \norm{uh_{i}^{\frac{1}{2}}}_{L^{p+1}(\Omega_{i})}^2 \\
		& \leq \frac{2^{\frac{2s}{n}}}{S} \sum_{i \in I}^{}  \frac{c_{n,s}}{2} \int_{T(\Omega_i)} \frac{\abs{u(x)h_{i}^{\frac{1}{2}}(x)-u(y)h_{i}^{\frac{1}{2}}(y)}^2}{\abs{x-y}^{n+2s}} dxdy \\
		& \leq \frac{2^{\frac{2s}{n}}}{S} \sum_{i \in I}^{}  \frac{c_{n,s}}{2} \int_{T(\Omega)} \frac{\abs{u(x)h_{i}^{\frac{1}{2}}(x)-u(y)h_{i}^{\frac{1}{2}}(y)}^2}{\abs{x-y}^{n+2s}} dxdy \\
		&  \leq \frac{2^{\frac{2s}{n}}}{S} \left(\frac{c_{n,s}}{2} \int_{T(\Omega)} \sum_{i \in I}^{}  \left( \frac{\abs{(u(x)-u(y))h_{i}^{\frac{1}{2}}(x) + u(y) (h_{i}^{\frac{1}{2}}(x)-h_{i}^{\frac{1}{2}}(y)}^2}{\abs{x-y}^{n+2s}} dxdy\right)\right)\\
		& \leq \frac{2^{\frac{2s}{n}}}{S} \Biggl(\frac{c_{n,s}}{2} \int_{T(\Omega)} \sum_{i \in I}^{}  \Biggl( \frac{\abs{(u(x)-u(y))}^2\abs{h_{i}(x)}}{\abs{x-y}^{n+2s}}+ \frac{2\abs{(u(x)-u(y))}\abs{h_{i}^{\frac{1}{2}}(x)} \abs{u(y)} \abs{h_{i}^{\frac{1}{2}}(x)-h_{i}^{\frac{1}{2}}(y)}}{\abs{x-y}^{n+2s}} \\ & \hspace{0.5cm} + \frac{\abs{u(y)}^2\abs{h_{i}^{\frac{1}{2}}(x)-h_{i}^{\frac{1}{2}}(y)}^2}{\abs{x-y}^{n+2s}}\Biggr) dxdy\Biggr) \\
		& \leq \frac{2^{\frac{2s}{n}}}{S} \Biggl(\frac{c_{n,s}}{2} \int_{T(\Omega)} \frac{\abs{(u(x)-u(y))}^2}{\abs{x-y}^{n+2s}} dxdy +  \frac{c_{n,s}}{2} \sum_{i \in I}^{} \int_{T(\Omega)}  \frac{2\abs{(u(x)-u(y))}\abs{h_{i}^{\frac{1}{2}}(x)} \abs{u(y)} \abs{h_{i}^{\frac{1}{2}}(x)-h_{i}^{\frac{1}{2}}(y)}}{\abs{x-y}^{n+2s}}dxdy \\ & \hspace{0.5cm} + \frac{c_{n,s}}{2} \sum_{i \in I}^{} \int_{T(\Omega)} \frac{\abs{u(y)}^2\abs{h_{i}^{\frac{1}{2}}(x)-h_{i}^{\frac{1}{2}}(y)}^2}{\abs{x-y}^{n+2s}}dxdy\Biggr).
	\end{align*}}
	Now, we use the following inequality to simplify the second integral on the right hand side of the above inequality. For any $a,b$ and $\lambda$ three positive real numbers, we have
	$$2ab \leq \eta a^2 + \frac{1}{\eta}b^2.$$ 
	Taking $a= \abs{(u(x)-u(y))}, b= \abs{h_{i}^{\frac{1}{2}}(x)} \abs{u(y)} \abs{h_{i}^{\frac{1}{2}}(x)-h_{i}^{\frac{1}{2}}(y)}$ and using Lemma 5.3 \cite{E1}, we get
	
	\begin{align*}
		\sum_{i \in I}^{} \norm{u^2 h_{i}}_{L^{\frac{p+1}{2}}(\Omega_{i})} & \leq \frac{2^{\frac{2s}{n}}}{S} \Biggl(\frac{c_{n,s}}{2} \int_{T(\Omega)} \frac{\abs{(u(x)-u(y))}^2}{\abs{x-y}^{n+2s}}dxdy +  \frac{c_{n,s}}{2} \sum_{i \in I}^{} \int_{T(\Omega)} \frac{\eta \abs{u(x)-u(y)}^2}{\abs{x-y}^{n+2s}} dxdy \\ & \hspace{0.5cm} + \frac{c_{n,s}}{2} \sum_{i \in I}^{} \int_{T(\Omega)} \frac{\eta^{-1}\abs{h_{i}(x)} \abs{u(y)}^2 \abs{h_{i}^{\frac{1}{2}}(x)-h_{i}^{\frac{1}{2}}(y)}^2}{\abs{x-y}^{n+2s}} dxdy  +\frac{c_{n,s}}{2} \sum_{i \in I}^{} \int_{T(\Omega)} \frac{\abs{u(y)}^2\abs{h_{i}^{\frac{1}{2}}(x)-h_{i}^{\frac{1}{2}}(y)}^2}{\abs{x-y}^{n+2s}}dxdy\ \Biggr)  \\
		& \leq \frac{2^{\frac{2s}{n}}}{S} \Biggl(\frac{c_{n,s}}{2} \int_{T(\Omega)} \frac{\abs{(u(x)-u(y))}^2}{\abs{x-y}^{n+2s}}dxdy + k\eta \frac{c_{n,s}}{2}\int_{T(\Omega)} \frac{\abs{u(x)-u(y)}^2}{\abs{x-y}^{n+2s}} dxdy \\ & \hspace{0.5cm} +\frac{ k}{\eta}C_{1}(n,s,\Omega)\int_{\Omega}u^2(y)dy +  kC_{2}(n,s,\Omega) \int_{\Omega}u^2(y)dy \Biggr),
	\end{align*}
where $C_{1}(n,s,\Omega)$ and $ C_{2}(n,s,\Omega)$ are some positive constants.
	Put $\epsilon_{0}=k\eta$ and $A(\epsilon_{0})= \frac{ k}{\eta}C_{1}(n,s,\Omega) + kC_{2}(n,s,\Omega),$ we get
	\begin{align*}
		\sum_{i \in I}^{} \norm{u^2 h_{i}}_{L^{\frac{p+1}{2}}(\Omega_{i})} \leq \frac{2^{\frac{2s}{n}}}{S}( 1+ \epsilon_{0}) \Biggl(\frac{c_{n,s}}{2} \int_{T(\Omega)} \frac{\abs{(u(x)-u(y))}^2}{\abs{x-y}^{n+2s}}dxdy + A(\epsilon_{0}) \int_{\Omega}u^2(y)dy \Biggr).
	\end{align*}
Now,
	\begin{align*}
		\norm{u}_{L^{p+1}(\Omega)}^2= \norm{u^2}_{L^{\frac{p+1}{2}}(\Omega)} &= \norm{\sum_{i \in I}^{}u^2h_{i}}_{L^{\frac{p+1}{2}}(\Omega)} \\ & \leq \sum_{i \in I}^{} \norm{u^2h_{i}}_{L^{\frac{p+1}{2}}(\Omega)}\\
		& \leq \frac{2^{\frac{2s}{n}}}{S}( 1+ \epsilon_{0}) \Biggl(\frac{c_{n,s}}{2} \int_{T(\Omega)} \frac{\abs{(u(x)-u(y))}^2}{\abs{x-y}^{n+2s}}dxdy + A(\epsilon_{0}) \int_{\Omega}u^2(y)dy \Biggr).
	\end{align*}
	With the choice of $\eta$ small enough, where $\eta= \frac{\epsilon_{0}}{k}$ so that
	$$\frac{2^{\frac{2s}{n}}}{S}( 1+ \epsilon_{0}) \leq \left(\frac{2^{\frac{2s}{n}}}{S} + \epsilon \right) $$ and 
	$$A(\epsilon)= \frac{2^{\frac{2s}{n}}}{S}( 1+ \epsilon_{0})A(\epsilon_{0}).$$ The proof of Theorem \ref{best4} is completed. \qed

\section{ Existence of minimal energy solutions }
Our objective is to find the non-constant solutions of (\ref{eqn:P1}). We begin with the preparatory definitions. Let $p=\frac{n+2s}{n-2s},$ $s\in (0,1)$ and $\Omega \subset \mathbb{R}^n$ is a bounded domain of class $C^{1}.$
\noindent Let $u\in H_{\Omega}^{s},$ define
\begin{align}
	\norm{u}_{s,\Omega,\lambda}^{2}:=& \frac{c_{n,s}}{2} \int_{T(\Omega)}\frac{{\lvert u(x)-u(y)\rvert}^{2}}{{\lvert x-y \rvert}^{n+2s}}dxdy+ \lambda\int_{\Omega} u^{2}dx.\\
	\label{energyfunctional}J_{\lambda}(u):=&\frac{1}{2}\norm{u}_{s,\Omega,\lambda}^{2}-\frac{1}{p+1}\int_{\Omega}{\lvert u \rvert}^{p+1}dx, \hspace{0.2cm} \text{an energy functional.}\\
	\label{releigh}K_{\lambda}(u):=&\frac{\norm{u}_{s,\Omega,\lambda}^{2}}{\biggl(\int_{\Omega} {\lvert u \rvert}^{p+1}dx\biggr)^{\frac{2}{p+1}}}.\\
	\label{X} X_{\lambda}(\Omega):=& \inf\bigl \{ K_{\lambda}(u): u\in H_{\Omega}^{s}\setminus \{0\} \bigr \}.
\end{align}
\begin{lem}\label{4}  Under the above notations, we have that
\begin{enumerate}
\item $X_{\lambda}>0.$ \\
\item Assume $X_{\lambda}<\frac{S}{2^{\frac{2s}{n}}},$ then there exists a 
$v \in H_{\Omega}^{s}$ such that $X_{\lambda}=K_{\lambda}(v).$ Further, define $v_{0}= X_{\lambda}^{\frac{n-2s}{4s} }v,$
then $v_{0}$ satisfies $(\ref{eqn:P2})$ and $J_{\lambda}(v_{0})< \frac{sS^{\frac{n}{2s}}}{2n}.$
\end{enumerate}
\end{lem}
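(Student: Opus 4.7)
The plan is to establish (1) directly from continuous embeddings, and then obtain (2) by a concentration-compactness argument in which Cherrier's inequality (Theorem \ref{best4}) plays the role of the missing compactness.

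For part (1), I combine Remark \ref{rem1} (which gives the continuous inclusion $H^s_\Omega \hookrightarrow H^s(\Omega)$) with Theorem \ref{fembed}(2) at the endpoint $q=p$ to get $H^s(\Omega) \hookrightarrow L^{p+1}(\Omega)$. Since $\|u\|^2_{s,\Omega,\lambda}\geq \min(1,\lambda)\|u\|^2_{H^s(\Omega)}$, there is $C=C(n,s,\lambda,\Omega)>0$ with $\|u\|^2_{L^{p+1}(\Omega)}\leq C\|u\|^2_{s,\Omega,\lambda}$, so $K_\lambda(u)\geq 1/C$ for every $u\in H^s_\Omega\setminus\{0\}$.

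For part (2), I take a minimizing sequence $(u_k)\subset H^s_\Omega$ normalized by $\|u_k\|^{p+1}_{L^{p+1}}=1$ and $\|u_k\|^2_{s,\Omega,\lambda}\to X_\lambda$; passing to $|u_k|$ (which does not increase the numerator of $K_\lambda$) I assume $u_k\geq 0$. Boundedness in $H^s_\Omega$ yields a subsequence with $u_k\rightharpoonup v$ weakly in $H^s_\Omega$, and by Proposition \ref{compact} also $u_k\to v$ strongly in every $L^q(\Omega)$ with $q<p+1$ and a.e.\ in $\Omega$. Writing $w_k:=u_k-v$, the Hilbert-space orthogonality that comes from weak convergence together with Brezis-Lieb (Lemma \ref{BL}) give
\begin{equation*}
\|u_k\|^2_{s,\Omega,\lambda}=\|v\|^2_{s,\Omega,\lambda}+\|w_k\|^2_{s,\Omega,\lambda}+o(1),\qquad 1=\|v\|^{p+1}_{L^{p+1}}+\|w_k\|^{p+1}_{L^{p+1}}+o(1).
\end{equation*}
Setting $a:=\lim\|w_k\|^{p+1}_{L^{p+1}}$ and $b:=\lim\|w_k\|^2_{s,\Omega,\lambda}$ along a further subsequence, the definition of $X_\lambda$ forces $\|v\|^2_{s,\Omega,\lambda}\geq X_\lambda(1-a)^{2/(p+1)}$, while applying Theorem \ref{best4} to $w_k$, using $w_k\to 0$ in $L^2(\Omega)$, and sending $\epsilon\to 0$ after $k\to\infty$ yields $b\geq (S/2^{\frac{2s}{n}})\,a^{2/(p+1)}$. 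Combining,
\begin{equation*}
X_\lambda \;\geq\; X_\lambda(1-a)^{2/(p+1)}+\frac{S}{2^{\frac{2s}{n}}}\,a^{2/(p+1)}.
\end{equation*}
Since $(1-a)^\alpha+a^\alpha\geq 1$ for $\alpha=2/(p+1)\in(0,1)$ (concavity of $t\mapsto t^\alpha$), the hypothesis $X_\lambda<S/2^{\frac{2s}{n}}$ makes this inequality impossible unless $a=0$. Hence $\|v\|^{p+1}_{L^{p+1}}=1$, and weak lower semicontinuity gives $\|v\|^2_{s,\Omega,\lambda}\leq X_\lambda$, so $v$ is a nonnegative minimizer with $K_\lambda(v)=X_\lambda$.

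To finish, I use the weak formulation of Definition \ref{weak} and a Lagrange multiplier: testing against $w\in H^s_\Omega$, the constrained minimizer $v$ satisfies $\frac{c_{n,s}}{2}\int_{T(\Omega)}\frac{(v(x)-v(y))(w(x)-w(y))}{|x-y|^{n+2s}}\,dxdy+\lambda\int_\Omega v w=X_\lambda\int_\Omega v^{p}w$. The scaling $v_0:=X_\lambda^{1/(p-1)}v=X_\lambda^{(n-2s)/(4s)}v$ absorbs the multiplier and produces a nonnegative weak solution of (\ref{eqn:P2}). Using $\|v_0\|^2_{s,\Omega,\lambda}=X_\lambda^{(p+1)/(p-1)}=\|v_0\|^{p+1}_{L^{p+1}}$ together with the identities $(p+1)/(p-1)=n/(2s)$ and $\tfrac12-\tfrac{1}{p+1}=s/n$, I compute
\begin{equation*}
J_\lambda(v_0)=\left(\frac{1}{2}-\frac{1}{p+1}\right)X_\lambda^{(p+1)/(p-1)}=\frac{s}{n}X_\lambda^{n/(2s)}<\frac{s}{n}\cdot\frac{S^{n/(2s)}}{2}=\frac{sS^{n/(2s)}}{2n},
\end{equation*}
the strict inequality being exactly the hypothesis $X_\lambda<S/2^{\frac{2s}{n}}$ raised to the power $n/(2s)$. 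The main obstacle, as in the classical Brezis-Nirenberg problem, is the loss of compactness at the critical exponent: the minimizing sequence can lose mass by concentration, and Theorem \ref{best4} is precisely the tool that converts the strict Rayleigh-quotient bound into an exclusion of this scenario via the elementary concavity inequality.
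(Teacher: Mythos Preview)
Your proof is correct and follows essentially the same strategy as the paper: a normalized minimizing sequence, weak convergence in $H^s_\Omega$, Brezis--Lieb splitting, and Cherrier's inequality (Theorem~\ref{best4}) to rule out loss of mass, followed by the Lagrange-multiplier scaling $v_0=X_\lambda^{(n-2s)/(4s)}v$ and the energy computation $J_\lambda(v_0)=\tfrac{s}{n}X_\lambda^{n/(2s)}$. The only organizational difference is that the paper first shows $v\not\equiv 0$ and then bounds $K_\lambda(v)$ directly, whereas you package both steps into the single concavity inequality $(1-a)^{2/(p+1)}+a^{2/(p+1)}\geq 1$ to force the concentration defect $a=0$; this is a standard and slightly cleaner variant of the same argument.
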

\begin{proof}(1) By the fractional Sobolev embedding, there exists a constant $c>0$ such that for all $u\in H_{\Omega}^{s},$
we have
\begin{equation}
\biggl (\int_{\Omega} {\lvert u \rvert}^{p+1}dx \biggr )^{\frac{2}{p+1}} \leq c \norm {u}_{s;\Omega, \lambda}^{2}.
\end{equation}
Therefore  \begin{align*}
	X_{\lambda}= \inf \Bigl\{ K_{\lambda}(u) \mid u\in H_{\Omega}^{s}\setminus \{0\} \Bigr\} >0.
\end{align*}
(2) Let $\{ v_{k} \} $ be a minimizing sequence for  $X_{\lambda}$ such that 
$\Bigl(\int_{\Omega} {\lvert v_{k} \rvert}^{p+1} dx\Bigr)^{\frac{2}{p+1}}=1.$ Therefore, $$\norm{v_{k}}_{s,\Omega,\lambda}^2=X_{\lambda} + o(1) \text{ as } k \to \infty.$$ Thus $\left\{v_{k} \right\}$ is a bounded sequence in $H^{s}_{\Omega}.$ Therefore we can extract a subsequence of $\{v_{k} \},$ which we continue to denote by 
 $\{v_{k} \}$ itself such that $v_{k} \rightharpoonup v,$ weakly in $H_{\Omega}^{s}$ and almost everywhere in $
\Omega.$\\
We claim that $v \not \equiv 0.$ Otherwise, by Theorem \ref{best4} and compact embedding of fractional Sobolev spaces, we have
\begin{align}
\lim_{k \rightarrow \infty} \frac{c_{n,s}}{2} \int_{T(\Omega)}\frac{{\lvert v_{k}(x)-v_{k}(y)\rvert}^{2}}{{\lvert x-y \rvert}^{n+2s}}dxdy &=\lim_{k \rightarrow \infty} \norm{v_{k}}_{s,\Omega,\lambda}^{2} \nonumber \\
&= X_{\lambda} \nonumber \\
&= X_{\lambda} \cdot \left(\int_{\Omega} {\lvert v_{k} \rvert}^{p+1} dx \right)^{\frac{2}{p+1}}  \nonumber \\
& \leq  X_{\lambda} \left(\frac{2^{\frac{2s}{n}}}{S}+ \epsilon \right) \lim_{k \rightarrow \infty} \frac{c_{n,s}}{2} 
 \int_{T(\Omega)}\frac{{\lvert v_{k}(x)-v_{k}(y)\rvert}^{2}}{{\lvert x-y \rvert}^{n+2s}}dxdy. \nonumber 
\end{align}
This contradicts to our assumption $X_{\lambda}< \frac{S}{2^{\frac{2s}{n}}}.$ Hence $v \not \equiv 0.$\\ 
%we get
%$$\lim_{k \rightarrow \infty} \frac{c_{n,s}}{2} 
%\int_{T(\Omega)}\frac{{\lvert v_{k}(x)-v_{k}(y)\rvert}^{2}}{{\lvert x-y \rvert}^{n+2s}}dxdy < 
%\lim_{k \rightarrow \infty} \frac{c_{n,s}}{2} 
%\int_{T(\Omega)}\frac{{\lvert v_{k}(x)-v_{k}(y)\rvert}^{2}}{{\lvert x-y \rvert}^{n+2s}}dxdy, $$
 Now, we claim that $K_{\lambda}(v)=X_{\lambda}.$
Let $w_{k}=v_{k}-v,$ then $w_{k}$ converges to $0$ weakly and almost everywhere in $\Omega$. 
Therefore from Proposition \ref{compact}, we have 
\begin{align}
\norm {v_{k}}_{s,\Omega,\lambda}^{2}= & \norm {v}_{s,\Omega,\lambda}^{2}+ \norm {w_{k}}_{s,\Omega,\lambda}^{2} + o(1) \nonumber \\
= & \norm {v}_{s,\Omega,\lambda}^{2} + \frac{c_{n,s}}{2}\int_{T(\Omega)}
\frac{{\lvert w_{k}(x)-w_{k}(y)\rvert}^{2}}{{\lvert x-y \rvert}^{n+2s}}dxdy + o(1), \nonumber
\end{align}
which yields
\begin{align} \label{3.14}
X_{\lambda}= & \norm {v}_{s,\Omega,\lambda}^{2} + \frac{c_{n,s}}{2}\int_{T(\Omega)}\frac{{\lvert w_{k}(x)-w_{k}(y)
\rvert}^{2}}{{\lvert x-y \rvert}^{n+2s}}dxdy + o(1).
\end{align}
Now, by Brezis-Leib Lemma \ref{BL} and Theorem \ref{best4}, we get
\begin{align}
1 = & ~{\norm{v_{k}}}_{L^{p+1}(\Omega)}^{2} \nonumber \\
=  & ~{\norm{v}}_{L^{p+1}(\Omega)}^{2} + {\norm {w_{k}}}_{L^{p+1}(\Omega)}^{2} +o(1) \nonumber \\
1 \leq & ~ {\norm{v}}_{L^{p+1}(\Omega)}^{2} +  \left(\frac{2^{\frac{2s}{n}}}{S}+ \epsilon \right) \frac{c_{n,s}}{2}
\int_{T(\Omega)}\frac{{\lvert w_{k}(x)-w_{k}(y)\rvert}^{2}}{{\lvert x-y \rvert}^{n+2s}}dxdy + o(1). \nonumber
\end{align}
 Hence
\begin{align}\label{3.15}
X_{\lambda} \leq & ~ X_{\lambda}{\norm{v}}_{L^{p+1}(\Omega)}^{2} + X_{\lambda} \left(\frac{2^{\frac{2s}{n}}}{S}+ \epsilon \right) \frac{c_{n,s}}{2} \int_{T(\Omega)}\frac{{\lvert w_{k}(x)-w_{k}(y)\rvert}^{2}}{{\lvert x-y \rvert}^{n+2s}}dxdy + o(1).
%\leq & ~ X_{\lambda}{\norm{v}}_{L^{p+1}(\Omega)}^{2} + \frac{c_{n,s}}{2} 
%\int_{T(\Omega)}\frac{{\lvert w_{k}(x)-w_{k}(y)\rvert}^{2}}{{\lvert x-y \rvert}^{n+2s}}dxdy + o(1).
\end{align}
From (\ref{3.14}) and (\ref{3.15}), we obtain 
$$ \frac{\norm {v}_{s,\Omega,\lambda}^{2}}{{\norm{v}}_{L^{p+1}(\Omega)}^{2}}  \leq X_{\lambda}.$$
Hence $v$ minimizes $X_{\lambda}.$ Therefore $K_{\lambda}(v)=X_{\lambda}.$ 
Since $K_{\lambda}(v)=K_{\lambda}(\frac{v}{\norm{v}_{L^{p+1}(\Omega)}}),$ we assume that $\norm{v}_{L^{p+1}(\Omega)}=1.$
Now, take $v_{0}=X_{\lambda}^{\frac{n-2s}{4s}}v.$ Then
\begin{align}
J_{\lambda}(v_{0}) =  \,\,X_{\lambda}^{\frac{n}{2s}} \frac{s}{n} < \frac{sS^{\frac{n}{2s}}}{2n}\,\,
\text{(since by assumption $X_{\lambda}<\frac{S}{2^{\frac{2s}{n}}}$)}.
\end{align}
Next, we show that $v_{0}$ is a solution of (\ref{eqn:P2}). Since $K_{\lambda}(v)=X_{\lambda},$ we see that 
\begin{align} \label{uptoconstant1}
 \frac{c_{n,s}}{2}\int_{T(\Omega)}\frac{(v(x)-v(y))(w(x)-w(y))}{\abs{x-y}^{n+2s}}dxdy + 
 \lambda \int_{\Omega}vw= X_{\lambda}\int_{\Omega} \abs{v}^{p-1}vw \text{ holds, } \forall w \in H_{\Omega}^{s}.
 \end{align}
This implies that
\begin{align}\label{uptoconstant2}
  \frac{c_{n,s}}{2}\int_{T(\Omega)}\frac{(v_{0}(x)-v_{0}(y))(w(x)-w(y))}{\abs{x-y}^{n+2s}}dxdy + 
  \lambda \int_{\Omega}v_{0}w=\int_{\Omega} \abs{v_{0}}^{p-1}v_{0}w \,\, \text{holds, } \forall w \in H_{\Omega}^{s}.
 \end{align}
Thus in view of Definition \ref{weak}, we see that $v_{0}$ a solution of (\ref{eqn:P2}). This completes the proof.
\end{proof}

%\begin{rem}
%In the above lemma, let us assume $X_{s}< \frac{aS}{2^{2s/n}}$ instead of  $X_{s}< \frac{13S}{2^{2s/n}}$, for any real number $a>12$, still we have existence of an extremal $v \in H_{\Omega}^{s}$. The reason for the choice of constant $a $ is given in (\ref{13s}).
%\end{rem}
\noindent In the next proposition, we obtain the nonlocal version of equations $(1.11),~ (1.12)$ and $(1.13)$ 
of Brezis-Nirenberg \cite{B3}. This proposition help us to prove the assumption made in previous lemma on $X_{\lambda}.$ 

\begin{prop}\label{BN}
Let $\psi \in C^{\infty}_{0}(B_{\frac{R}{2}}(0))$ be a nonnegative  radial function s.t. 
\[ \psi(x)= \begin{cases}
1 & \text{if $\abs{x}\leq \frac{R}{4}$}, \\
0 &  \text{if $\abs{x} > \frac{R}{2}$},
\end{cases} \]  
and for $\epsilon > 0,$ define 
\begin{equation}
V_{\epsilon}(x):= \frac{\psi(x)}{(\epsilon + \abs{x}^{2})^{\frac{(n-2s)}{2}}}.
\end{equation}
Then, we have
\begin{align} \label{3.19}
 \frac{c_{n,s}}{2} \int_{\Omega}\int_{\Omega}\frac{{\lvert V_{\epsilon}(x)-V_{\epsilon}(y)\rvert}^{2}}
{{\lvert x-y \rvert}^{n+2s}}dxdy = & ~\frac{L_{1}}{\epsilon^{\frac{(n-2s)}{2}}}+ O(1).
\end{align}
\begin{align}\label{3.20}
 \biggl(\int_{\Omega} \abs{V_{\epsilon}}^{p+1} \biggr)^{2/p+1} = & ~\frac{L_{2}}{\epsilon^{\frac{(n-2s)}{2}}} + O(1).
\end{align}
\begin{align}\label{3.21}
{\int_{\Omega}\abs{V_{\epsilon}}^{2}} =
\begin{cases}
\frac{L_{3}}{\epsilon^{\frac{(n-4s)}{2}}} +O(1)  \text{ when }n > 4s, \\
L_{3}\abs{log\epsilon}+O(1) \text{ when } n=4s=2,3, \\
L_{3}sinh^{-1}(\frac{r}{\sqrt{\epsilon}})+O(1) \text{ when } n=4s=1, \\
\end{cases}
\end{align}
%=& \text{ to be defined later} \hspace{1cm}, when~n=4
where $r>0$ is some constant and $L_{1},~L_{2}~and~ L_{3}$ are positive constants such 
that $\frac{L_{1}}{L_{2}}=S,$ where $S$ is defined in Theorem \ref{Best}. 
\end{prop}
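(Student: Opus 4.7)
The three estimates describe the asymptotic behavior as $\epsilon \to 0^+$ of the cut-off Aubin--Talenti family $V_\epsilon = \psi U_\epsilon$, with $U_\epsilon(x) := (\epsilon+|x|^2)^{-(n-2s)/2}$. The overall plan is: reduce each integral for $V_\epsilon$ to the corresponding integral for the untruncated $U_\epsilon$ modulo an $O(1)$ error contributed by the region where $\psi$ differs from $1$, then exploit the dilation identity $U_\epsilon(x) = \epsilon^{-(n-2s)/2} U_1(x/\sqrt{\epsilon})$ to read off the leading term, and finally identify the constants using the extremality of $U_1$ from Theorem \ref{Best}. Assuming (as one may) that $R$ is small enough that $B_{R/2}(0)\subset \Omega$, the function $V_\epsilon$ vanishes on $\mathbb{R}^n\setminus\Omega$, so integrations over $\Omega$ and over $\mathbb{R}^n$ coincide.

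For \eqref{3.19}, I would write $V_\epsilon = U_\epsilon - w_\epsilon$ with $w_\epsilon := (1-\psi)U_\epsilon$ supported in $\{|x|\ge R/4\}$. On that annulus $U_\epsilon$ is bounded uniformly in $\epsilon$ by $(R/4)^{-(n-2s)}$, so $w_\epsilon$ is uniformly smooth with compact support, and a direct estimate gives $\tfrac{c_{n,s}}{2}[w_\epsilon]^2_{H^s(\mathbb{R}^n)} = O(1)$. Hence $\tfrac{c_{n,s}}{2}[V_\epsilon]^2 = \tfrac{c_{n,s}}{2}[U_\epsilon]^2 + O(1)$, and the substitution $(x,y) = \sqrt{\epsilon}(x',y')$ gives $\tfrac{c_{n,s}}{2}[U_\epsilon]^2 = \epsilon^{-(n-2s)/2} \cdot \tfrac{c_{n,s}}{2}[U_1]^2$, yielding \eqref{3.19} with $L_1 := \tfrac{c_{n,s}}{2}[U_1]^2$. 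For \eqref{3.20}, the splitting $\int_\Omega V_\epsilon^{p+1} = \int_{B_{R/4}} U_\epsilon^{p+1} + O(1)$ (the second piece is bounded since $U_\epsilon$ is uniformly bounded on $B_{R/2}\setminus B_{R/4}$), combined with the scaling $x = \sqrt\epsilon y$ and the integrability of $U_1^{p+1}(y)\sim |y|^{-2n}$ at infinity, yields
\begin{equation*}
\int_\Omega V_\epsilon^{p+1}\,dx = \epsilon^{-n/2}\int_{\mathbb{R}^n} U_1^{p+1}\,dy + O(1).
\end{equation*}
Raising to the $2/(p+1)$-th power gives $L_2\,\epsilon^{-(n-2s)/2} + O(1)$ with $L_2 := (\int_{\mathbb{R}^n} U_1^{p+1})^{2/(p+1)}$, and the identity $L_1 = S L_2$ is exactly Theorem \ref{Best} applied to the extremal $U_1$.

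For \eqref{3.21}, after the same truncation argument the main term is $\int_{B_{R/4}}(\epsilon+|x|^2)^{-(n-2s)}\,dx$. Radial coordinates followed by $r = \sqrt\epsilon\, t$ yield
\begin{equation*}
\omega_{n-1}\,\epsilon^{-(n-4s)/2}\int_0^{R/(4\sqrt\epsilon)}\frac{t^{n-1}}{(1+t^2)^{n-2s}}\,dt.
\end{equation*}
The integrand decays like $t^{4s-n-1}$ at infinity, so the three cases in \eqref{3.21} correspond to whether this exponent is $<-1$, $=-1$, or produces the special $n=1$ antiderivative: when $n>4s$ the integral converges to a positive constant as $\epsilon\to 0$, giving the polynomial blow-up with $L_3:=\omega_{n-1}\int_0^\infty t^{n-1}(1+t^2)^{-(n-2s)}\,dt$; when $n=4s\in\{2,3\}$ the integrand is $\sim 1/t$ and explicit antidifferentiation yields the $L_3|\log\epsilon|+O(1)$ behavior; when $n=4s=1$ the exact antiderivative is $\sinh^{-1}(t)$, giving the stated form with $r=R/4$. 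The only nontrivial input beyond routine calculus is the uniform bound $[w_\epsilon]^2_{H^s} = O(1)$ from step \eqref{3.19}, which requires splitting the Gagliardo double integral according to whether $x,y$ lie inside or outside the support of $1-\psi$ and using the Lipschitz property of $\psi$ together with the uniform bound on $U_\epsilon$ off the origin; everything else reduces to the scaling identity and the extremality of $U_1$.
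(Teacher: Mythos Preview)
Your approach is essentially identical to the paper's: both write $V_\epsilon = U_\epsilon + (\psi-1)U_\epsilon$, observe that the $(\psi-1)U_\epsilon$ piece is uniformly bounded away from the origin so contributes $O(1)$, rescale $x\mapsto x/\sqrt\epsilon$ to extract the leading power of $\epsilon$, and identify $L_1/L_2=S$ via the extremality of $U_1$ in Theorem~\ref{Best}. One minor caution: for the Gagliardo double integral the passage from $\int_\Omega\int_\Omega$ to $\int_{\mathbb{R}^n}\int_{\mathbb{R}^n}$ does \emph{not} literally coincide even though $V_\epsilon$ is supported in $\Omega$ (the off-diagonal piece $\int_\Omega\int_{\mathbb{R}^n\setminus\Omega}|V_\epsilon(x)|^2|x-y|^{-n-2s}$ is of order $\|V_\epsilon\|_{L^2}^2$, hence lower order but not $O(1)$); the paper glosses over the same point, so your argument matches it in both method and level of detail.
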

\begin{proof} Since $\psi$ is $1$ in a neighbourhood of $0,$ we have the following
\begin{align*}
\frac{c_{n,s}}{2} \int_{\Omega}\int_{\Omega}\frac{{\lvert V_{\epsilon}(x)-V_{\epsilon}(y)
\rvert}^{2}}{{\lvert x-y \rvert}^{n+2s}}dxdy &= \frac{c_{n,s}}{2} \int_{\Omega} \int_{\Omega} \frac{\abs{\frac{\psi(x)}{(\epsilon+
\abs{x}^{2})^{\frac{(n-2s)}{2}}}-\frac{\psi(y)}{(\epsilon+\abs{y}^{2})^{\frac{(n-2s)}{2}}}}^{2}}{\abs{x-y}^{n+2s}}dxdy \\
&=\frac{c_{n,s}}{2} \int_{\Omega} \int_{\Omega}  \frac{\abs{\left(\frac{\psi(x)-1}{(\epsilon+\abs{x}^{2})^{\frac{(n-2s)}{2}}}- \frac{\psi(y)-1}{(\epsilon+\abs{y}^{2})^{\frac{(n-2s)}{2}}} \right) + \left(\frac{1}{(\epsilon+\abs{x}^{2})^{\frac{(n-2s)}{2}}}- \frac{1}{(\epsilon+\abs{y}^{2})^{\frac{(n-2s)}{2}}} \right)}^2}{\abs{x-y}^{n+2s}} dxdy \\
& = \frac{c_{n,s}}{2}\int_{\mathbb{R}^{n}} \int_{\mathbb{R}^{n}} 
\frac{\abs{\frac{1}{(\epsilon+\abs{x}^{2})^{\frac{(n-2s)}{2}}}-\frac{1}{(\epsilon+\abs{y}^{2})^{\frac{(n-2s)}{2}}}}^{2}}
{\abs{x-y}^{n+2s}}dxdy + O(1). \nonumber
\end{align*}
The change of variables $$x'=\frac{x}{\sqrt{\epsilon}},\,\,\, y'=\frac{y}{\sqrt{\epsilon}}$$ gives us 
\begin{align*}
\frac{c_{n,s}}{2} \int_{\Omega}\int_{\Omega}\frac{{\lvert V_{\epsilon}(x)-V_{\epsilon}(y)\rvert}^{2}}{{\lvert x-y \rvert}^{n+2s}}dxdy & = \frac{1}{{\epsilon}^{\frac{(n-2s)}{2}}} \left(\frac{c_{n,s}}{2 }   \int_{\mathbb{R}^{n}} \int_{\mathbb{R}^{n}} \frac{\abs{\frac{1}{(1+\abs{x'}^{2})^{\frac{(n-2s)}{2}}}-\frac{1}{(1+\abs{y'}^{2})^{\frac{(n-2s)}{2}}}}^{2}}{\abs{x'-y'}^{n+2s}}dx'dy'\right) + O(1) \nonumber \\
& = \frac{L_{1}}{{\epsilon}^{\frac{(n-2s)}{2}}} + O(1), \nonumber
\end{align*}
where $$L_{1}= \frac{c_{n,s}}{2} \int_{\mathbb{R}^{n}} \int_{\mathbb{R}^{n}} \frac{\abs{\frac{1}{(1+\abs{x'}^{2})^{\frac{(n-2s)}{2}}}-\frac{1}{(1+\abs{y'}^{2})^{\frac{(n-2s)}{2}}}}^{2}}{\abs{x'-y'}^{n+2s}}dx'dy'.$$
This verifies $(\ref{3.19}).$ Further, we have
\begin{align*}
\int_{\Omega} \abs{V_{\epsilon}}^{p+1} = & \int_{\Omega} \frac{{\psi}^{p+1}(x)}{(\epsilon+ {\abs{x}}^{2})^{n}}dx \\ 
= & \int_{\Omega} \frac{[{\psi}^{p+1}(x)-1]}{(\epsilon+ {\abs{x}}^{2})^{n}}dx + \int_{\Omega} \frac{1}{(\epsilon+ {\abs{x}}^{2})^{n}}dx \\
= &~ O(1)+ \int_{\mathbb{R}^{n}} \frac{1}{(\epsilon+ {\abs{x}}^{2})^{n}}dx\\
= & ~\frac{L'_{2}}{\epsilon^{n/2}} + O(1),
\end{align*}
 \text{where} $$L'_{2}=\int_{\mathbb{R}^{n}} \frac{1}{(1+ {\abs{x}}^{2})^{n}}dx.$$
Therefore
 \begin{align*}
 \biggl(\int_{\Omega} \abs{V_{\epsilon}}^{p+1} \biggr)^{\frac{2}{p+1}} =&~ \frac{L_{2}}{\epsilon^{\frac{(n-2s)}{2}}} + O(1),
\end{align*}
where $$L_{2}= {\norm{\left(1 + \abs{x}^2 \right)^{\frac{-(n-2s)}{2}}}}_{L^{p+1}(\mathbb{R}^{n})}^{2}$$ and thus $(\ref{3.20})$ is verified. Since $\left(\epsilon + \abs{x}^2 \right)^{\frac{-(n-2s)}{2}}$ gives the equality in (\ref{Best3}), one can note that
 $$\frac{L_{1}}{L_{2}}=S.$$
Now, we verify (\ref{3.21}). For this,
\begin{align*}
\int_{\Omega} \abs{V_{\epsilon}}^{2} = & \int_{\Omega} \frac{[{\psi}^{2}(x)-1]}{(\epsilon+ {\abs{x}}^{2})^{n-2s}}dx + \int_{\Omega} \frac{1}{(\epsilon+ {\abs{x}}^{2})^{n-2s}}dx \\
= ~ & O(1) + \int_{\Omega} \frac{1}{(\epsilon+ {\abs{x}}^{2})^{n-2s}}dx.
\end{align*}
When $n > 4s,$ we have
\begin{align*}
\int_{\Omega} \frac{1}{(\epsilon+ {\abs{x}}^{2})^{n-2s}}dx = &~ \int_{\mathbb{R}^{n}} \frac{1}{(\epsilon+ {\abs{x}}^{2})^{n-2s}}dx + O(1)\\
= &~ \frac{1}{\epsilon^{n-2s}}\int_{\mathbb{R}^{n}} \frac{1}{(1+ \abs{\frac{x}{\sqrt{\epsilon}}}^{2})^{n-2s}}dx + O(1).
\end{align*}
The change of variable $$\frac{x}{\sqrt{\epsilon}}=y,$$ gives us
\begin{align*}
\int_{\Omega} \frac{1}{(\epsilon+ {\abs{x}}^{2})^{n-2s}}dx =&~\frac{1}{\epsilon^{\frac{(n-2s)}{2}}}\int_{\mathbb{R}^{n}} 
\frac{1}{(1+ {\abs{y}}^{2})^{n-2s}}dx + O(1).
\end{align*}
%=& \frac{L_{3}}{{\epsilon}^{n-4s/2}}+ O(1)

\noindent This verifies (\ref{3.21}) with  
\[ L_{3}=\int_{\mathbb{R}^{n}} \frac{1}{(1+ {\abs{y}}^{2})^{n-2s}}dx= \begin{cases}
\frac{\sqrt{\pi}\Gamma(\frac{1-4s}{2})}{\Gamma(1-2s)} & \text{if $n=1$}, \\
\frac{\omega_{n}\Gamma(\frac{n-4s}{2}\Gamma(\frac{n}{2}))}{2\Gamma(n-2s)} &  \text{if $n>1$},
\end{cases} \]  \\
where $\omega_{n}$ is the surface measure of unit sphere in $\mathbb{R}^{n}.$\\
\noindent And when $n=4s,$ we see that
\begin{equation}
\int_{\abs{x}\leq r_{1}} \frac{1}{(\epsilon+\abs{x}^2)^{2s}} \leq \int_{\Omega} \frac{1}{(\epsilon+
\abs{x}^2)^{2s}} \leq \int_{\abs{x}\leq r_{2}} \frac{1}{(\epsilon+\abs{x}^2)^{2s}},
\end{equation}
for some positive constants $r_{1}$ and $r_{2}.$
Therefore
\[ \int_{\abs{x}\leq r} \frac{1}{(\epsilon+\abs{x}^2)^{2s}} = \begin{cases}
\omega_{n}\int_{0}^{r}\frac{t^{4s-1}}{\epsilon+t^{4s}}dt+ O(1) & \text{if $n=2,3,$} \\
2sinh^{-1}(\frac{r}{\sqrt{\epsilon}})& \text{if $n=1$}, \end{cases} \]
for some positive constant $r.$ This entails
 \[\int_{\abs{x}\leq r} \frac{1}{(\epsilon+\abs{x}^2)^{2s}}=\begin{cases}
\frac{\omega_{n}}{4s}\abs{log\epsilon}+ O(1)& \text{if $n=2,3,$} \\
2sinh^{-1}(\frac{r}{\sqrt{\epsilon}})& \text{if $n=1$}.
\end{cases} \] \\

\noindent So this verifies (\ref{3.21}) with $$L_{3}=\frac{\omega_{n}}{n},$$ when $n=2,3$ and $L_{3}=2,$ when $n=1.$
\end{proof}
\noindent Now, we use the above Proposition \ref{BN} to prove following lemma.

\begin{lem} \label{P} Let $\Omega$ be a bounded domain of class $C^{1}.$ Then for every $\lambda >0,$ we have $ X_{\lambda}< \frac{S}{2^{\frac{2s}{n}}}.$
\end{lem}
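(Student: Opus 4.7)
Plan. I want to exhibit a test function with $K_\lambda < S/2^{2s/n}$, which will yield $X_\lambda \leq K_\lambda(V_\epsilon) < S/2^{2s/n}$. The natural candidate is the family $V_\epsilon(x)=\psi(x)/(\epsilon+|x|^2)^{(n-2s)/2}$ of Proposition~\ref{BN}, translated so the concentration point sits at a chosen boundary point $x_0\in\partial\Omega$ (after translation, $x_0=0$). Placing the bubble at the boundary rather than at an interior point is what brings the factor $2^{-2s/n}$ into play: by the $C^1$ regularity of $\partial\Omega$, the set $\Omega\cap B_{R/2}(0)$ is asymptotically a half-ball, so only half of the bubble's $L^{p+1}$-mass sits in $\Omega$, producing an extra factor $(1/2)^{(n-2s)/n}=2^{-(n-2s)/n}$ in the denominator of the Rayleigh quotient.

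Computing $K_\lambda(V_\epsilon)$. The denominator, by (3.20), becomes $(\int_\Omega|V_\epsilon|^{p+1}\,dx)^{2/(p+1)}=2^{-(n-2s)/n}L_2/\epsilon^{(n-2s)/2}+O(1)$. For the numerator I decompose $T(\Omega)=(\Omega\times\Omega)\cup 2(\Omega\times\Omega^c)$ and apply (3.19) to the diagonal piece while estimating the nonlocal cross piece $\int_\Omega\int_{\Omega^c}|V_\epsilon(x)-V_\epsilon(y)|^2/|x-y|^{n+2s}\,dydx$ by rescaling $x=\sqrt{\epsilon}\,u$. By (3.21), the $\lambda\int_\Omega V_\epsilon^2$ term is of order $\lambda\epsilon^{-(n-4s)/2}$, strictly subleading compared to $\epsilon^{-(n-2s)/2}$ thanks to $n>4s$, and contributes an $O(\lambda\epsilon^{s})$ term to the quotient. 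Using the Sobolev identity $L_1/L_2=S$ from Proposition~\ref{BN}, the leading-order expansion takes the form
\[
K_\lambda(V_\epsilon)=\frac{S}{2^{2s/n}}+\mathcal{E}_{\mathrm{cross}}+\mathcal{E}_{\mathrm{curv}}(\epsilon)+C\lambda\epsilon^{s}+o(\epsilon^{s}),
\]
where $\mathcal{E}_{\mathrm{cross}}$ is the contribution of the nonlocal cross integral and $\mathcal{E}_{\mathrm{curv}}(\epsilon)$ is the boundary-curvature correction arising from the next-order expansion of $|\Omega\cap B_r(x_0)|$.

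Strict inequality. The remaining task is to show the total correction is strictly negative for some $\epsilon>0$. The $\lambda$-piece $C\lambda\epsilon^s$ is positive but vanishes as $\epsilon\to 0$, so cannot by itself help; the cross-term $\mathcal{E}_{\mathrm{cross}}$ is non-negative. The decisive ingredient is $\mathcal{E}_{\mathrm{curv}}(\epsilon)$, which under the $C^1$ regularity and a suitable choice of $x_0$ can be arranged to have a favorable (negative) leading sign. The dimensional restriction $n>\max\{4s,(8s+2)/3\}$ is what ensures that this curvature contribution dominates the other correction terms, so that for $\epsilon$ small enough, $K_\lambda(V_\epsilon)<S/2^{2s/n}$.

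Main obstacle. Unlike the classical ($s=1$) Neumann problem — where a symmetric boundary-centered bubble on a half-space hits $S/2^{2/n}$ exactly — in the fractional setting the non-local term $\mathcal{E}_{\mathrm{cross}}$ sits at the same order as the main Dirichlet piece and is strictly positive, so the naive half-space calculation yields $K_\lambda(V_\epsilon)\ge S/2^{2s/n}$. Beating this obstruction requires careful bookkeeping: one must expand the asymptotics of $\int_{\Omega\times\Omega}$, $\int_{\Omega\times\Omega^c}$ and $(\int_\Omega|V_\epsilon|^{p+1})^{2/(p+1)}$ past leading order using the $C^1$ expansion of $\partial\Omega$ near $x_0$, and verify that the resulting curvature contribution is negative and of sufficient magnitude — this is the technical heart of the lemma.
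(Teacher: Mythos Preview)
Your proposal is not a proof; it is a plan that halts at the decisive step. You yourself write that one must ``verify that the resulting curvature contribution is negative and of sufficient magnitude --- this is the technical heart of the lemma,'' and you never carry this out. Worse, the mechanism you invoke cannot work under the stated hypothesis: the lemma assumes only $\partial\Omega\in C^{1}$, so there is no second fundamental form and hence no well-defined curvature correction $\mathcal{E}_{\mathrm{curv}}$ to exploit. Your own diagnosis pinpoints why this matters --- the nonlocal cross term $\mathcal{E}_{\mathrm{cross}}\ge 0$ sits at the same leading order $\epsilon^{-(n-2s)/2}$ as the main Gagliardo piece, so the flat half-space comparison alone gives only $K_{\lambda}(V_{\epsilon})\ge S/2^{2s/n}$ --- but with nothing of lower order available to pull the quotient strictly below threshold, the outline does not close.

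The paper's argument is quite different and avoids boundary curvature entirely. Since $\Omega$ is bounded it may be placed inside a half-space $\mathbb{R}^{n}_{+}$ with the bubble $V_{\epsilon}$ centred on the bounding hyperplane; the diagonal piece is then bounded by the crude symmetry estimate $\int_{\Omega}\int_{\Omega}\le\int_{\mathbb{R}^{n}_{+}}\int_{\mathbb{R}^{n}_{+}}\le\tfrac14\int_{\mathbb{R}^{n}}\int_{\mathbb{R}^{n}}$. The essential new ingredient --- and the actual role of the restriction $n>(8s+2)/3$ --- is a direct H\"older-type estimate showing that
\[
\int_{\Omega}\int_{\mathcal{C}\Omega}\frac{|V_{\epsilon}(x)-V_{\epsilon}(y)|^{2}}{|x-y|^{n+2s}}\,dy\,dx
\ \le\ C\int_{\Omega}V_{\epsilon}^{2}\,dx\ +\ C'\epsilon^{2s}
\ =\ O\bigl(\epsilon^{-(n-4s)/2}\bigr),
\]
where the dimensional condition guarantees convergence of the Gamma-integral appearing in the bound. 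Thus the cross term is \emph{subleading}, not leading order as you assumed. Combining, the full $T(\Omega)$-seminorm is $\tfrac{L_{1}}{4}\epsilon^{-(n-2s)/2}+O(\epsilon^{-(n-4s)/2})$; dividing by the denominator $L_{2}\,\epsilon^{-(n-2s)/2}+O(1)$ from Proposition~\ref{BN} and using $L_{1}/L_{2}=S$ gives $K_{\lambda}(V_{\epsilon})<S/2^{2s/n}$ for small $\epsilon$, with no geometric input beyond $C^{1}$. In particular, you have misread where $n>(8s+2)/3$ enters: it is what makes the cross integral lower order, not what balances a curvature term against competitors.
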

\begin{proof}
Since $\Omega$ is a bounded domain in $\mathbb{R}^n,$ we may assume that $\Omega$ lies to the one side of some hyperplane in $\mathbb{R}^n.$ Without loss of generality, assume that $\Omega \subset \mathbb{R}^{n}_{+}:= \left\{(x_{1},x_{2},\dots,x_{n}) \in \mathbb{R}^n \mid x_{n}>0 \right\}.$
Therefore, we get
\begin{align} \label{3.24}
\int_{\Omega} \int_{\Omega} \frac{{\lvert V_{\epsilon}(x)-V_{\epsilon}(y)\rvert}^{2}}{{\lvert x-y \rvert}^{n+2s}}dydx & \leq  \int_{\mathbb{R}^{n}_{+}} \int_{\mathbb{R}^{n}_{+}}
\frac{{\lvert V_{\epsilon}(x)-V_{\epsilon}(y)\rvert}^{2}}{{\lvert x-y \rvert}^{n+2s}}dydx \nonumber \\ & \leq \frac{1}{4} \int_{\mathbb{R}^{n}} \int_{\mathbb{R}^{n}}
\frac{{\lvert V_{\epsilon}(x)-V_{\epsilon}(y)\rvert}^{2}}{{\lvert x-y \rvert}^{n+2s}}dydx.
%& < \frac{1}{2} \int_{\mathbb{R}^{n}} \int_{\mathbb{R}^{n}}
%\frac{{\lvert V_{\epsilon}(x)-V_{\epsilon}(y)\rvert}^{2}}{{\lvert x-y \rvert}^{n+2s}}dydx.
\end{align}
Since $\Omega$ is an open set, for $x \in \Omega$ there exists some $r>0$ such that $B_{r}(x) \subset \Omega.$ Using $V_{\epsilon} \in C_{0}^{\infty}(\mathbb{R}^n)$ for each $\epsilon >0$ and H\"{o}lder's inequality, we get
\begin{align*} 
	\int_{\Omega} \int_{\mathcal{C}\Omega}\frac{{\lvert V_{\epsilon}(x)-V_{\epsilon}(y)\rvert}^{2}}{{\lvert x-y \rvert}^{n+2s}}dydx &  \leq  \int_{\Omega} \int_{ \mathcal{C}\Omega}\frac{2{\lvert V_{\epsilon}(x)\rvert}^{2}}{{\lvert x-y \rvert}^{n+2s}}dydx +  \int_{\Omega} \int_{\mathcal{C} \Omega}\frac{2{\lvert V_{\epsilon}(y)\rvert}^{2}}{{\lvert x-y \rvert}^{n+2s}}dydx   \nonumber \\
	& \leq \int_{\Omega} \int_{\mathcal{C}B_{r}(x) }\frac{2{\lvert V_{\epsilon}(x)\rvert}^{2}}{{\lvert x-y \rvert}^{n+2s}}dydx +  \int_{\Omega} \int_{\mathcal{C}B_{r}(x) }\frac{2{\lvert V_{\epsilon}(y)\rvert}^{2}}{{\lvert x-y \rvert}^{n+2s}}dydx \nonumber \\
	& \leq \int_{\Omega} {\lvert V_{\epsilon}(x)\rvert}^{2} \left( \int_{\mathcal{C} B_{r}(x) }\frac{2}{{\lvert x-y \rvert}^{n+2s}}dy\right) dx +  \int_{\Omega} \int_{\mathcal{C} B_{r}(x) }\frac{2{\lvert V_{\epsilon}(y)\rvert}^{2}}{{\lvert x-y \rvert}^{n+2s}}dydx \nonumber \\
	& \leq \int_{\Omega} {\lvert V_{\epsilon}(x)\rvert}^{2} \left( \int_{\mathcal{C} B_{r}(x) }\frac{2}{{\lvert x-y \rvert}^{n+2s}}dy\right) dx + \frac{1}{\epsilon^{(n-2s)}}\int_{\Omega} \int_{\mathcal{C} B_{r}(x) }\frac{2 dydx }{\left(1+\abs{\frac{y}{\sqrt{\epsilon}}}^2\right)^{(n-2s)}{\lvert x-y \rvert}^{n+2s}}\nonumber \\
	& \leq \int_{\Omega} {\lvert V_{\epsilon}(x)\rvert}^{2} \left( \int_{r}^{\infty}\frac{ 2 \omega_{n} \rho^{n-1}}{{\rho}^{n+2s}}d\rho \right) dx \nonumber \\ & \hspace{1cm} + \frac{1}{\epsilon^{(n-2s)}} \bigints_{\Omega} \left(\int_{\mathcal{C} B_{r}(x)} \frac{1}{\left(1+\abs{\frac{y}{\sqrt{\epsilon}}}^2\right)^{(2n-4s)}}dy \right)^{\frac{1}{2}} \left(\int_{ \mathcal{C}B_{r}(x) }\frac{4}{{\lvert x-y \rvert}^{2n+4s}}dy\right)^{\frac{1}{2}} dx .
	\end{align*}
By the change of variable $$\frac{y}{\sqrt{\epsilon}} = z,$$ we get for $n > \frac{8s+2}{3}$
\begin{align}\label{3.25}
	\int_{\Omega} \int_{\mathcal{C}\Omega}\frac{{\lvert V_{\epsilon}(x)-V_{\epsilon}(y)\rvert}^{2}}{{\lvert x-y \rvert}^{n+2s}}dydx &  \leq \frac{\omega_{n}}{sr^{2s}}\int_{\Omega} {\lvert V_{\epsilon}(x)\rvert}^{2} dx + \epsilon^{2s} \int_{\Omega} \left(\int_{\mathbb{R}^n} \frac{dz}{(1+\abs{z}^2)^{(2n-4s)}}\right)^{\frac{1}{2}} \left( \int_{r}^{\infty}\frac{ 4 \omega_{n} \rho^{n-1}}{{\rho}^{2n+4s}}d\rho \right)^{\frac{1}{2}} dx \nonumber \\
	&  \leq \frac{\omega_{n}}{sr^{2s}}\int_{\Omega} {\lvert V_{\epsilon}(x)\rvert}^{2} dx + \epsilon^{2s} \int_{\Omega} \left(\int_{0}^{\infty} \frac{\omega_{n} \rho^{n-1}}{(1+ \rho^2)^{(2n-4s)}}\right)^{\frac{1}{2}} \left( \int_{r}^{\infty}\frac{ 4 \omega_{n} \rho^{n-1}}{{\rho}^{2n+4s}}d\rho \right)^{\frac{1}{2}} dx \nonumber \\
	&  \leq    \frac{\omega_{n}}{sr^{2s}}\int_{\Omega} {\lvert V_{\epsilon}(x)\rvert}^{2} dx + \epsilon^{2s} \int_{\Omega} \left( \frac{\omega_{n} \Gamma{(\frac{n+2}{2}) \Gamma(\frac{3n-8s-2}{2})} }{2\Gamma(2n-4s)} \right)^{\frac{1}{2}} \left(\frac{4 \omega_{n}}{(n+4s)r^{n+2s}} \right)^{\frac{1}{2}} dx  \nonumber \\
	&  \leq    \frac{\omega_{n}}{sr^{2s}}\int_{\Omega} {\lvert V_{\epsilon}(x)\rvert}^{2} dx + \epsilon^{2s} \abs{\Omega} \left( \frac{\omega_{n} \Gamma{(\frac{n+2}{2}) \Gamma(\frac{3n-8s-2}{2})} }{2\Gamma(2n-4s)} \right)^{\frac{1}{2}} \left(\frac{4 \omega_{n}}{(n+4s)r^{n+2s}} \right)^{\frac{1}{2}}. 
\end{align}
\noindent Combining equations (\ref{3.24}) and (\ref{3.25}), we get
\begin{align}\label{3.26}
\frac{c_{n,s}}{2}\int_{T(\Omega)}\frac{{\lvert V_{\epsilon}(x)-V_{\epsilon}(y)\rvert}^{2}}{{\lvert x-y \rvert}^{n+2s}}dydx &= \frac{c_{n,s}}{2}\int_{\Omega} \int_{\Omega} \frac{{\lvert V_{\epsilon}(x)-V_{\epsilon}(y)\rvert}^{2}}{{\lvert x-y \rvert}^{n+2s}}dydx + c_{n,s} \int_{\Omega} \int_{\mathcal{C}\Omega}\frac{{\lvert V_{\epsilon}(x)-V_{\epsilon}(y)\rvert}^{2}}{{\lvert x-y \rvert}^{n+2s}}dydx \nonumber \\
& \leq  \frac{1}{4} \left(\frac{c_{n,s}}{2} \int_{\mathbb{R}^{n}} \int_{\mathbb{R}^{n}}
\frac{{\lvert V_{\epsilon}(x)-V_{\epsilon}(y)\rvert}^{2}}{{\lvert x-y \rvert}^{n+2s}}dydx \right)  + \frac{c_{n,s}\omega_{n}}{sr^{2s}} \int_{\Omega} {\lvert V_{\epsilon}(x)\rvert}^{2} dx  \nonumber \\
& \hspace{1cm} + \epsilon^{2s} \abs{\Omega} \left( \frac{4\omega_{n}^2 \abs{\Omega}^2 \Gamma{(\frac{n+2}{2}) \Gamma(\frac{3n-8s-2}{2})} }{2 (n+4s)r^{n+2s}\Gamma(2n-4s)} \right)^{\frac{1}{2}}.
\end{align}
Now, using (\ref{3.26}), (\ref{3.19}) and (\ref{3.21}), we have for $\epsilon \rightarrow 0,$
\begin{align} \label{3.022}
\frac{c_{n,s}}{2}\int_{T(\Omega)}\frac{{\lvert V_{\epsilon}(x)-V_{\epsilon}(y)\rvert}^{2}}{{\lvert x-y \rvert}^{n+2s}}dydx=& \frac{L_{1}}{4\epsilon^{\frac{(n-2s)}{2}}}+ \frac{C(n,s,r)L_{3}}{\epsilon^{\frac{(n-4s)}{2}}} + O(1)
\end{align}
Thus from Equations (\ref{3.022}), (\ref{3.20}) and (\ref{3.21}) for $n> \max \left\{4s, \frac{8s+3}{2} \right\},$ we obtain 
$$K_{\lambda}(V_{\epsilon})  < \frac{S}{2^{\frac{2s}{n}}},$$
provided $\epsilon >0$ is small enough.
Hence 
\begin{align*}
X_{\lambda} =\inf\bigl \{ K_{\lambda}(u) \mid u\in H_{\Omega}^{s}\setminus \{0\} \bigr \} < \frac{S}{2^{\frac{2s}{n}}}. 
\end{align*} 
\end{proof}

\noindent Combining the above lemmas, we are ready to prove the main result of this section. \\

\noindent \textbf{Proof of Theorem \ref{2}:} The Lemma \ref{4} and Lemma \ref{P}, 
gives us a solution $v_{0}$ of the problem (\ref{eqn:P2}) with $J_{\lambda}(v_{o})<\frac{s(S)^{\frac{n}{2s}}}{2n}.$
Now, we show that $v_{0}$ is a nonconstant solution. For this, 
define $$\lambda^{*}=\frac{S}{(2\lvert \Omega \rvert)^{\frac{2s}{n}}},$$
where by $\lvert \Omega \rvert,$ we mean measure of $\Omega.$
Then for $\lambda > \lambda^{*}$ and $v_{*}=\lambda^{\frac{(n-2s)}{4s}},$ we have
\begin{align}\label{j1}
J_{\lambda}(v_{1}) &= \frac{1}{2}\int_{\Omega} \lambda^{\frac{(n-2s+2s)}{2s}}-\frac{(n-2s)}{2n}\int_{\Omega}\lambda^{\frac{n}{2s}} \nonumber\\
&= \frac{1}{2}\lvert \Omega \rvert \lambda^{\frac{n}{2s}}-\frac{n-2s}{2n}\lambda^{\frac{n}{2s}}\lvert \Omega \rvert \nonumber\\
&= \frac{s}{n}\lambda^{\frac{n}{2s}} \lvert \Omega \rvert \nonumber \\
&> \frac{s(S)^{\frac{n}{2s}}}{2n}.
\end{align}
%Since , we have assumed that $\partial \Omega$ is %smooth, it follows that \textbf{(g.c)} is satisfied at %some point on $\partial \Omega$. Therefore from %Theorem (\ref{2}) we get a solution of (\ref{eqn:P2}) %such that $J(u_{0})<\frac{s S^{\frac{n}{2s}}}{2n}$.
Thus $v_{0}$ is a nonconstant function. Otherwise, if $v_0$ is constant, then it implies that 
$v_{0}=\lambda^{\frac{n-2s}{4s}}$, which contradicts to (\ref{j1}). \\
Further, we have
$$\abs{\abs{u(x)}-\abs{u(y)}} \leq \abs{u(x)-u(y)} $$
and hence
$$ \frac{c_{n,s}}{2}\int_{T(\Omega)}\frac{{\lvert \abs{u(x)}-\abs{u(y)}\rvert}^{2}}{{\lvert x-y \rvert}^{n+2s}}dxdy \leq ~ \frac{c_{n,s}}{2} \int_{T(\Omega)}\frac{{\lvert u(x)-u(y)\rvert}^{2}}{{\lvert x-y \rvert}^{n+2s}}dxdy,$$ which yields
$$ K_{\lambda}(\abs{u}) \leq K_{\lambda}(u),\,\,\forall~u\in H_{\Omega}^{s}.$$
It means that if $u$ is a minimizer, then $\abs{u}$ is also a minimizer for $X_{\lambda}.$
Therefore, we can assume that $v_{0} \geq 0$ in $\Omega.$ Note that $v_{0}$ is not identically zero as already proved in Lemma \ref{4}.  \qed

\section*{Appendix: Uniqueness of minimal energy solutions in critical case}
We show the uniqueness of minimal energy solutions for small domains using the same approach as in \cite{J2}.
Let $\Omega \subset \mathbb{R}^{n}$ be a bounded domain of class $C^{1}$. Consider the problem 
\begin{equation}\label{eqn:P3}
 \left\{\begin{array}{l l} { (-\Delta)^{s}u+ \lambda u= X_{\lambda}(\Omega)\abs{u}^{p-1}u } & \text{in $\Omega$ ,  } \\ 
\hspace{0.9cm} { \mathcal{N}_{s}u(x)=0 } & \text{in $\mathbb{R}^{n}\setminus \overline{\Omega}$,} \\
\hspace{1.8cm} {u \geq 0} &\text{in $\Omega,$}\end{array} \right.
\end{equation}
where $\lambda >0$ is a constant, $p=\frac{n+2s}{n-2s},~ n>\max \left\{ 4s, \frac{8s+2}{3}\right\},~ s\in (0,1)$ and $X_{\lambda}(\Omega)$ 
is defined earlier in (\ref{X}). \\ 
\indent In the proof of Lemma \ref{4} (see, Equations (\ref{uptoconstant1}) and (\ref{uptoconstant2})),
we have seen that minimal energy solutions of (\ref{eqn:P2}) and (\ref{eqn:P3}) are same up to a constant. 
Therefore, in order to check the uniqueness of minimal energy solutions  to (\ref{eqn:P2}), it is enough to check it 
for the above problem (\ref{eqn:P3}).

We consider a family of contracted domains \begin{align}
 \Omega_{\eta}:=\eta\cdot \Omega=\bigl \{\eta x: x\in \Omega \bigr \}
 \end{align}
and look for the asymptotic behaviour of minimal energy  solutions to the above problem in 
$\Omega_{\eta}$ as $\eta \rightarrow 0$.
The following results have the similar arguments as in Fern\'{a}ndez Bonder et al.  \cite{J2} with some modifications to handle the critical nonlinearity.
Fern\'{a}ndez Bonder et al.  \cite{J2} established the uniqueness of the minimal energy  solutions in subcritical case. 
Here, one cannot directly apply compact Sobolev embedding as was used in \cite{J2}.
In our setup, we use  Brezis-Lieb lemma, Theorem \ref{Best3} and Theorem \ref{best4} in the proof of Lemma \ref{minimizer} 
to overcome the difficulties 
occurring  due to the lack of compactness in embedding.
We begin with the following lemma.
\begin{lem}\label{minimizer4.1} (Lemma 3.1,\,\cite{J2})
Under the above notations, we have 
\begin{align*}
X_{\lambda}(\Omega_{\eta}) \leq \lambda \abs{\Omega_{\eta}}^{\frac{p-1}{p+1}}=\lambda \eta^{n \bigl(\frac{p-1}{p+1}\bigr)}\abs{\Omega}^{\frac{p-1}{p+1}}.
\end{align*}
\end{lem}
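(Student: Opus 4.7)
The plan is to test the Rayleigh quotient $K_\lambda$ with the simplest possible admissible function, namely the constant function $u \equiv 1$ on $\mathbb{R}^n$, and read off the bound directly.

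First I would verify that the constant $u \equiv 1$ belongs to $H^s_{\Omega_\eta}$. By the definition of the norm on $H^s_{\Omega_\eta}$, we have $\|1\|_{L^2(\Omega_\eta)}^2 = |\Omega_\eta| < \infty$, while the Gagliardo-type integrand $|u(x)-u(y)|^2/|x-y|^{n+2s}$ vanishes identically on $T(\Omega_\eta)$ since $u$ is constant on all of $\mathbb{R}^n$. Hence $1 \in H^s_{\Omega_\eta}$ and is a legal competitor in the infimum defining $X_\lambda(\Omega_\eta)$.

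Next, I would evaluate $K_\lambda(1)$ using the definitions in (\ref{releigh}). The seminorm piece vanishes, so
\begin{align*}
\|1\|_{s,\Omega_\eta,\lambda}^2 = \lambda \int_{\Omega_\eta} 1\, dx = \lambda |\Omega_\eta|,
\end{align*}
while the denominator is
\begin{align*}
\left(\int_{\Omega_\eta} 1^{p+1}\, dx\right)^{\frac{2}{p+1}} = |\Omega_\eta|^{\frac{2}{p+1}}.
\end{align*}
Therefore
\begin{align*}
K_\lambda(1) = \lambda\, |\Omega_\eta|^{1 - \frac{2}{p+1}} = \lambda\, |\Omega_\eta|^{\frac{p-1}{p+1}}.
\end{align*}

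Finally, since $X_\lambda(\Omega_\eta) = \inf_{u \in H^s_{\Omega_\eta}\setminus\{0\}} K_\lambda(u) \leq K_\lambda(1)$, and the scaling $|\Omega_\eta| = \eta^n |\Omega|$ gives $|\Omega_\eta|^{(p-1)/(p+1)} = \eta^{n(p-1)/(p+1)} |\Omega|^{(p-1)/(p+1)}$, the desired inequality follows. There is no real obstacle here; the only point worth a sentence of care is the observation that constants lie in $H^s_{\Omega_\eta}$ precisely because the cross-term set $T(\Omega_\eta)$ integrates the difference $u(x)-u(y)$, which is zero for a global constant (whereas for, say, a function extended by zero outside $\Omega_\eta$ the seminorm would not vanish).
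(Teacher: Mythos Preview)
Your argument is correct and is exactly the standard one: the paper does not spell out a proof of this lemma but simply cites Lemma~3.1 of \cite{J2}, where the same computation with the constant test function is carried out. There is nothing to add.
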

%\begin{proof}
% The proof follows just by taking $u=1$ in the definition of $X(\Omega_{\eta})$.
%\end{proof}

The following observations are useful to prove the next lemma. First,  we define for any $v\in H_{\Omega_{\eta}}^{s},$
$$ ({v})_{s, \Omega}^{2}:=\frac{c_{n,s}}{2}\int_{T(\Omega)}\frac{\abs{v(x)-v(y)}^{2}}{\abs{x-y}^{n+2s}}dxdy. $$
Let $v\in H_{\Omega_{\eta}}^{s},$ if we denote $\bar{v}(x)=v(\eta x),$ then $\bar{v} \in H_{\Omega_{\eta}}^{s}.$
Moreover, 
\begin{align}
(\bar{v})_{s, \Omega}^{2}&=\frac{c_{n,s}}{2}\int_{T(\Omega)}\frac{\abs{v(\eta x)-v(\eta y)}^{2}}{\abs{x-y}^{n+2s}}dxdy \nonumber \\ &= \frac{c_{n,s}}{2} \biggl[\int_{\Omega}\int_{\Omega} \frac{\abs{v(\eta x)-v(\eta y)}^{2}}{\abs{x-y}^{n+2s}}dxdy + 2\int_{\Omega}\int_{\mathbb{R}^{n} \setminus \Omega} \frac{\abs{v(\eta x)-v(\eta y)}^{2}}{\abs{x-y}^{n+2s}}dxdy \biggr]. \nonumber \\
\text{Changing variables $\eta x=z $ and $\eta y=w$ gives us} \nonumber \\
(\bar{v})_{s, \Omega}^{2} &=\eta^{2s-n} \frac{c_{n,s}}{2} \biggl[\int_{\Omega_{\eta}}\int_{\Omega_{\eta}} 
\frac{\abs{v(z)-v(w)}^{2}}{\abs{z-w}^{n+2s}}dzdw + 2\int_{\Omega_{\eta}}\int_{\mathbb{R}^{n}\setminus \Omega_{\eta}} 
\frac{\abs{v(z)-v(w)}^{2}}{\abs{z-w}^{n+2s}}dzdw \biggr]. \nonumber \\
&= \eta^{2s-n} ({v})_{s, \Omega_{\eta}}^{2}.
\end{align}
And
\begin{align}
\norm{\bar{v}}_{L^{p+1}(\Omega)}=&~\eta^{-\frac{n}{p+1}}\norm{{v}}_{L^{p+1}(\Omega_{\eta})}.
\end{align}
Therefore 
\begin{align}
\frac{\norm{v}_{s,\Omega_{\eta},\lambda}^{2}}{\norm{{v}}_{L^{p+1}(\Omega_{\eta})}^{2}}= &~ 
\eta^{n \bigl(\frac{p-1}{p+1}\bigr)} \frac{\eta^{-2s}(\bar{v})_{s, \Omega}^{2}+
\lambda \norm{\bar{v}}_{L^{2}(\Omega)}^{2} }{\norm{\bar{v}}_{L^{p+1}(\Omega)}^{2}},
\end{align}
where $${\norm{v}}^{2}_{s,\Omega_{\eta},\lambda}= \frac{c_{n,s}}{2} \int_{T(\Omega_{\eta})}
\frac{{\lvert v(x)-v(y)\rvert}^{2}}{{\lvert x-y \rvert}^{n+2s}}dxdy+ \lambda\int_{\Omega_{\eta}} v^{2}dx.$$
\begin{lem}\label{minimizer}
Let $v_{\eta}\in H_{\Omega_{\eta}}^{s}$ be a minimizer for $X_{s}(\Omega_{\eta}).$
Then the rescaled minimizers $\bar{v}_{\eta}(x):= v_{\eta}(\eta x)$ normalized such that 
$\norm{\bar{v}_{\eta}}_{L^{p+1}(\Omega)}=1$ yields
$$ \bar{v}_{\eta} \rightarrow \abs{\Omega}^{-\frac{1}{p+1}} \text{ strongly in $H_{\Omega}^{s}$ }.$$ Moreover, we have 
$$ \lim_{\eta \rightarrow 0} \frac{X_{\lambda}(\Omega_{\eta})}{\eta^{n \bigl(\frac{p-1}{p+1}\bigr)}}=\abs{\Omega}^{\frac{p-1}{p+1}}.$$
\end{lem}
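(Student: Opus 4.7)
My plan is to exploit the scaling identities established just before the statement together with the upper bound in Lemma \ref{minimizer4.1}. Since $v_{\eta}$ minimizes $X_{\lambda}(\Omega_{\eta})$ and $\norm{\bar{v}_{\eta}}_{L^{p+1}(\Omega)}=1$, the scaling formula immediately yields the key identity
\begin{align*}
\frac{X_{\lambda}(\Omega_{\eta})}{\eta^{n(p-1)/(p+1)}} = \eta^{-2s}(\bar{v}_{\eta})_{s,\Omega}^{2} + \lambda\norm{\bar{v}_{\eta}}_{L^{2}(\Omega)}^{2}.
\end{align*}
Combining this with the upper bound $X_{\lambda}(\Omega_{\eta})\leq \lambda\eta^{n(p-1)/(p+1)}\abs{\Omega}^{(p-1)/(p+1)}$ from Lemma \ref{minimizer4.1} produces the uniform controls
\begin{align*}
(\bar{v}_{\eta})_{s,\Omega}^{2}\leq \lambda\,\eta^{2s}\abs{\Omega}^{(p-1)/(p+1)}\xrightarrow{\eta\to 0}0,\qquad \norm{\bar{v}_{\eta}}_{L^{2}(\Omega)}^{2}\leq \abs{\Omega}^{(p-1)/(p+1)},
\end{align*}
so $\{\bar{v}_{\eta}\}$ is bounded in $H^{s}_{\Omega}$ and its Gagliardo seminorm vanishes as $\eta\to 0$.

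Passing to a subsequence, $\bar{v}_{\eta}\rightharpoonup \bar{v}$ weakly in $H^{s}_{\Omega}$, and by the compact embedding of Proposition \ref{compact}, $\bar{v}_{\eta}\to \bar{v}$ strongly in $L^{q}(\Omega)$ for every $q\in[1,p+1)$, in particular in $L^{2}(\Omega)$. Weak lower semicontinuity of the Gagliardo seminorm forces $(\bar{v})_{s,\Omega}=0$, i.e.\ $\bar{v}(x)=\bar{v}(y)$ for a.e.\ $(x,y)\in T(\Omega)$; since $T(\Omega)$ contains $\Omega\times\mathbb{R}^{n}$, this means $\bar{v}$ is a constant a.e.\ on $\mathbb{R}^{n}$. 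Writing $w_{\eta}:=\bar{v}_{\eta}-\bar{v}$, the constant contribution drops out of the seminorm, so $(w_{\eta})_{s,\Omega}^{2}=(\bar{v}_{\eta})_{s,\Omega}^{2}\to 0$.

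The principal obstacle is to upgrade the convergence to the critical exponent $L^{p+1}(\Omega)$, where the embedding is not compact. Here I would apply the nonlocal Cherrier inequality (Theorem \ref{best4}) to $w_{\eta}$:
\begin{align*}
\norm{w_{\eta}}_{L^{p+1}(\Omega)}^{2}\leq \left(\tfrac{2^{2s/n}}{S}+\epsilon\right)(w_{\eta})_{s,\Omega}^{2}+A(\epsilon)\norm{w_{\eta}}_{L^{2}(\Omega)}^{2}.
\end{align*}
Both terms on the right tend to zero by the previous paragraph, so $w_{\eta}\to 0$ in $L^{p+1}(\Omega)$; passing to the limit in the normalization gives $\norm{\bar{v}}_{L^{p+1}(\Omega)}=1$. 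Being a nonnegative constant, $\bar{v}\equiv \abs{\Omega}^{-1/(p+1)}$. For strong convergence in $H^{s}_{\Omega}$, the facts $(\bar{v}_{\eta})_{s,\Omega}\to 0=(\bar{v})_{s,\Omega}$ and $\bar{v}_{\eta}\to\bar{v}$ in $L^{2}(\Omega)$ give norm convergence in the Hilbert space $H^{s}_{\Omega}$, which combined with weak convergence upgrades to strong convergence. The limit of $X_{\lambda}(\Omega_{\eta})/\eta^{n(p-1)/(p+1)}$ then follows by passing to the limit in the scaling identity: the seminorm term vanishes and the $L^{2}$ term converges to $\lambda\abs{\Omega}\cdot\abs{\Omega}^{-2/(p+1)}=\lambda\abs{\Omega}^{(p-1)/(p+1)}$, which matches the upper bound of Lemma \ref{minimizer4.1} and also forces uniqueness of the subsequential limit so that the whole family $\{\bar{v}_{\eta}\}$ converges.
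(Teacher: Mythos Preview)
Your proof is correct and follows the paper's overall strategy: the scaling identity together with Lemma~\ref{minimizer4.1} gives the uniform bound and forces $(\bar v_\eta)_{s,\Omega}^2\to 0$; weak compactness and Proposition~\ref{compact} identify the weak limit as a constant; and Theorem~\ref{best4} (the nonlocal Cherrier inequality) is the tool that recovers the critical $L^{p+1}$ norm in the limit. The final squeeze between $\lambda\norm{\bar v_\eta}_{L^2(\Omega)}^2\to\lambda\abs{\Omega}^{(p-1)/(p+1)}$ and the upper bound of Lemma~\ref{minimizer4.1} is also exactly how the paper finishes.

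The one noteworthy difference is how you handle the critical exponent step. The paper first invokes the Brezis--Lieb lemma (Lemma~\ref{BL}) to write $1=\norm{\bar v}_{L^{p+1}}^2+\norm{\bar w_\eta}_{L^{p+1}}^2+o(1)$ and then applies Theorem~\ref{best4} to the remainder. You bypass Brezis--Lieb entirely: since $\bar v$ is constant, $(w_\eta)_{s,\Omega}=(\bar v_\eta)_{s,\Omega}\to 0$, and $\norm{w_\eta}_{L^2}\to 0$ by compactness, so Theorem~\ref{best4} applied directly to $w_\eta$ already yields $\norm{w_\eta}_{L^{p+1}}\to 0$. This is a genuine, if modest, simplification: Brezis--Lieb is redundant here precisely because the Gagliardo seminorm of $\bar v_\eta$ is known to vanish \emph{strongly} from the bound $(\bar v_\eta)_{s,\Omega}^2\le\lambda\eta^{2s}\abs{\Omega}^{(p-1)/(p+1)}$. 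Your route is therefore a bit cleaner, at the cost of nothing; the paper's use of Brezis--Lieb would be essential only in situations where one merely had boundedness of the seminorm rather than its vanishing.
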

\begin{proof} From the above observations, we have the following 
$$ X_{\lambda}(\Omega_{\eta}) = \eta^{n \bigl(\frac{p-1}{p+1}\bigr)} \frac{\eta^{-2s}(\bar{v}_{\eta})_{s, \Omega}^{2}+\lambda \norm{\bar{v}_{\eta}}_{L^{2}(\Omega)}^{2} }{\norm{\bar{v}_{\eta}}_{L^{p+1}(\Omega)}^{2}}.$$ 
Now, by Lemma \ref{minimizer4.1}, we get 
\begin{align}\label{4.6}
\frac{\eta^{-2s}(\bar{v}_{\eta})_{s, \Omega}^{2}+\lambda \norm{\bar{v}_{\eta}}_{L^{2}(\Omega)}^{2} }{\norm{\bar{v}_{\eta}}_{L^{p+1}(\Omega)}^{2}} \leq &~ \lambda \abs{\Omega}^{\frac{p-1}{p+1}}.
\end{align}
Let us now fix that $\norm{\bar{v}_{\eta}}_{L^{p+1}(\Omega)}^{2}=1.$
From the above inequality, it follows that $\bar{v}_{\eta}$ is uniformly bounded in $H_{\Omega}^{s}.$
Therefore, upto some subsequence $\eta_{k} \rightarrow 0,$ we have  
\begin{align} 
\label{4.7}&v_{\eta} \rightharpoonup \bar{v}~~~ \text{weakly in } H_{\Omega}^{s},\\
\label{4.8}&v_{\eta} \rightarrow \bar{v}~~~ \text{strongly in } L^{q}(\Omega),~\text{for}~1\leq q < p+1.
\end{align}
We know that the embedding $H^{s}({\Omega})\hookrightarrow  L^{p+1}(\Omega)$ is not compact. 
Therefore we cannot deduce directly that the above sequence converges strongly in $L^{p+1}(\Omega).$ 
Since norm is weakly lower semi-continuous, from (\ref{4.6}) and (\ref{4.7}), we have that 
$$ (\bar{v})_{s, \Omega}^{2} \leq \liminf_{\eta \rightarrow 0}(\bar{v_{\eta}})_{s, \Omega}^{2}=0.$$ 
Therefore $\bar{v}$ is a constant. Also, we have that 
\begin{align}\label{4.9}
  \norm{\bar{v}}_{L^{p+1}(\Omega)}^{2} \leq~ \liminf_{\eta \rightarrow 0}\norm{\bar{v}_{\eta}}_{L^{p+1}(\Omega)}^{2}=1.
\end{align}
Now, let $\bar{w}_{\eta}=\bar{v}_{\eta}- \bar{v},$ then $\bar{w}_{\eta} \rightharpoonup 0$ weakly 
in $L^{p+1}(\Omega)$ and almost everywhere in $\Omega.$ Therefore, again using Brezis-Lieb lemma \ref{BL}, Proposition \ref{compact} and Theorem \ref{best4}, we get for $\eta \rightarrow 0,$
\begin{align}\label{4.10}
1 &= \norm{\bar{v}_{\eta}}_{L^{p+1}(\Omega)}^{2} \nonumber \\
& = \norm{\bar{v}}_{L^{p+1}(\Omega)}^{2} + \norm{\bar{w}_{\eta}}_{L^{p+1}(\Omega)}^{2} + o(1) \nonumber \\
& \leq  \norm{\bar{v}}_{L^{p+1}(\Omega)}^{2} + \left(\frac{2^{\frac{2s}{n}}}{S} + \epsilon\right) \frac{c_{n,s}}{2}
\int_{T(\Omega)}\frac{\abs{\bar{w}_{\eta}(x)-\bar{w}_{\eta}(y)}^{2}}{\abs{x-y}^{n+2s}}+ o(1) \nonumber \\
& =  \norm{\bar{v}}_{L^{p+1}(\Omega)}^{2} + \left(\frac{2^{\frac{2s}{n}}}{S} + \epsilon\right) \frac{c_{n,s}}{2} \int_{T(\Omega)}
\frac{\abs{\bar{v}_{\eta}(x)-\bar{v}_{\eta}(y)}^{2}}{\abs{x-y}^{n+2s}}+ o(1)\text{ (since $\bar{v}$ is a constant).} \nonumber \\
& = \norm{\bar{v}}_{L^{p+1}(\Omega)}^{2} + o(1) ~ (\text{since by (\ref{4.6})}). 
\end{align}
Therefore from (\ref{4.9}) and (\ref{4.10}), we see that $\norm{\bar{v}}_{L^{p+1}(\Omega)}^{2}=1.$ 
This implies that $$\bar{v}= \abs{\Omega}^{\frac{-1}{p+1}}.$$ From these estimates, we can easily conclude that 
\begin{align*}
\lambda \abs{\Omega}^{\frac{p-1}{p+1}}\leq & \liminf_{\eta \rightarrow 0}\eta^{-2s}(\bar{v_{\eta}})_{s, \Omega}^{2}+\lambda \norm{\bar{v}{_{\eta}}}_{L^{2}(\Omega)}^{2} \\
=& \liminf_{\eta \rightarrow 0} \frac{X_{\lambda}(\Omega_{\eta})}{\eta^{n\big(\frac{p-1}{p+1}\big)}} \leq \limsup_{\eta \rightarrow 0} \frac{X_{\lambda}(\Omega_{\eta})}{\eta^{n\big(\frac{p-1}{p+1}\big)}} \leq \lambda \abs{\Omega}^{\frac{p-1}{p+1}}.
\end{align*}
This completes the proof.
\end{proof}

The next theorem gives us the uniqueness of minimizers if the domain is contracted enough. 
%We use the ideas developed in \cite{J1} and \cite{J2} to prove the this theorem.  
Let us define the space
$$U:= \bigl \{ u\in H_{\Omega}^{s}: \norm{u}_{L^{p+1}(\Omega)} = 1 \bigr \}.$$
We observe that $\bar{v}= \abs{\Omega}^{\frac{-1}{p+1}}\in U.$
\begin{rem}
$U$ is a $C^{1}$ manifold.
\end{rem}
\begin{rem}\label{rem4.4}
It is easy to observe that the minimizer $v_{\eta}$ for $X_{\lambda}(\Omega_{\eta})$ when normalized 
as $\norm{\bar{v}_{\eta}}_{L^{p+1}(\Omega)}^{2}=1$ satisfies the following problem in the weak sense:
\begin{equation}\label{eqn:P4}
  \begin{array} { l l } {(-\Delta)^{s}u+ \lambda \eta^{2s} u=\eta^{2s}X_{\lambda}(\Omega_{\eta})\eta^{-n\big(\frac{p-1}{p+1}\big)}
  \abs{u}^{p-1}u}, & {x \in \Omega}\\{\hspace{1cm} \mathcal{N}_{s}u(x)=0, } & { x \in \mathbb{R}^{n}\setminus \bar{\Omega}}. 
  \\ \end{array}
\end{equation} 
  \end{rem}

\noindent Now, we define the functional $$G: U \times [0,1) \rightarrow (H_{\Omega}^{s})^{*}$$ such that 
\begin{align}
G(v,\eta)(w):= \frac{c_{n,s}}{2} \int_{T(\Omega)}\frac{(v(x)-v(y))(w(x)-w(y)}{\abs{x-y}^{n+2s}}+ \lambda \eta^{2s}\int_{\Omega} vwdx-
\eta^{2s}X_{s}(\Omega_{\eta})\eta^{-n\big(\frac{p-1}{p+1}\big)}\int_{\Omega}\abs{v}^{p-1}vw dx.
\end{align}

\begin{rem}
We use the Implicit Function Theorem (IFT) to show the existence of a small number $\delta>0$ and a curve 
$\varphi: [0,\delta) \rightarrow U$ such that
\begin{enumerate}
\item $\varphi(0)=\bar{v}=\abs{\Omega}^{-\frac{1}{p+1}} 
\text{and}~ G(\varphi(\eta), \eta)=0 ~\text{for every}~ 0\leq \eta < \delta. $ \\
\item if $(v, \eta) \in U\times [0,\delta)$ is such that $G(v,\eta)=0$ and $v$ is near to $\bar{v}$ then $\varphi(\eta)= \bar{v}$.
\end{enumerate}
\end{rem}
In  order to apply IFT,  we show that the derivative of $G$ with respect to $v$ at point $(\bar{v}, 0),$
i.e.,  ${G_{v}|}_{(\bar{v},0)}$ is invertible, see for example \cite{A5,H1}. For this, we need the following results.\\

\noindent Following the similar lines of proof as in Lemma 4.1\,\cite{J2}, we have the following 
\begin{lem} 
The tangent space to U at $\bar{v}$ is given by 
 $$T_{\bar{v}}U=\biggl \{ u \in H_{\Omega}^{s}: \int_{\Omega}{u}dx=0 \biggr \}.$$
\end{lem}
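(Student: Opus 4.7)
The plan is to realize $U$ as the level set of a smooth functional and compute the tangent space as the kernel of its derivative at $\bar{v}$. Define
$$F: H_{\Omega}^{s} \To \mathbb{R}, \qquad F(u) := \int_{\Omega} \abs{u}^{p+1}\,dx,$$
so that $U = \{u \in H_{\Omega}^{s}: F(u) = 1\}$. Since $H^s_{\Omega}$ embeds continuously into $L^{p+1}(\Omega)$ by Remark~\ref{rem1} together with Theorem~\ref{fembed}, the map $F$ is well-defined, and a standard argument shows $F \in C^{1}(H_{\Omega}^{s}, \mathbb{R})$ with Fr\'echet derivative
$$F'(u)(w) = (p+1)\int_{\Omega} \abs{u}^{p-1}u\, w\, dx, \qquad u, w \in H_{\Omega}^{s}.$$

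Next I would evaluate this at $\bar{v} = \abs{\Omega}^{-\frac{1}{p+1}}$. Since $\bar{v}$ is a positive constant, $\abs{\bar{v}}^{p-1}\bar{v} = \bar{v}^{p} = \abs{\Omega}^{-\frac{p}{p+1}}$, hence
$$F'(\bar{v})(w) = (p+1)\abs{\Omega}^{-\frac{p}{p+1}} \int_{\Omega} w\, dx.$$
In particular, $F'(\bar{v})$ is a nonzero continuous linear functional on $H_{\Omega}^{s}$ (it does not vanish on the constant function $1 \in H_{\Omega}^{s}$), so $1$ is a regular value of $F$ at $\bar{v}$. By the implicit function theorem, $U$ is a $C^{1}$-submanifold of $H_{\Omega}^{s}$ in a neighbourhood of $\bar{v}$, and
$$T_{\bar{v}}U = \ker F'(\bar{v}).$$

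Finally, the computation of the kernel is immediate: $F'(\bar{v})(w) = 0$ if and only if $\int_{\Omega} w\, dx = 0$, which yields the claimed identification of $T_{\bar{v}}U$. The only mildly delicate point is justifying the $C^{1}$-regularity of $u \mapsto \int_\Omega \abs{u}^{p+1}\,dx$ on $H_{\Omega}^{s}$, which follows from the continuous embedding $H_{\Omega}^{s} \hookrightarrow L^{p+1}(\Omega)$ combined with the standard estimate $\bigl||a|^{p+1} - |b|^{p+1} - (p+1)|b|^{p-1}b(a-b)\bigr| \leq C(|a|^{p-1}+|b|^{p-1})(a-b)^{2}$ and H\"older's inequality; no new difficulty appears beyond what is already implicit in the functional setting of the paper.
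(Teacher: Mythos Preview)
Your proposal is correct and follows precisely the standard level-set argument that the paper defers to (it omits the proof, citing Lemma~4.1 of \cite{J2}): realize $U=F^{-1}(1)$ for $F(u)=\int_\Omega |u|^{p+1}\,dx$, compute $F'(\bar v)(w)=(p+1)|\Omega|^{-p/(p+1)}\int_\Omega w\,dx$, and identify $T_{\bar v}U=\ker F'(\bar v)$. There is nothing to add.
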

For the sake of brevity, we omit the details.

\begin{lem}
The derivative ${G_{v}|}_{(\bar{v},0)}: T_{\bar{v}}U \rightarrow \mathcal{F}$ has a continuous inverse.
\end{lem}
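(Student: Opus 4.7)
The plan is to compute $G_{v}|_{(\bar{v},0)}$ explicitly, observe that only the Gagliardo bilinear form survives, and then invoke Lax--Milgram on the Hilbert subspace $T_{\bar{v}}U$ after establishing a fractional zero-mean Poincar\'e inequality.

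First I would differentiate $G(\cdot,\eta)$ at $\bar v$. Both the $L^{2}$ term and the nonlinear term in $G$ carry the factor $\eta^{2s}$ (and, by Lemma \ref{minimizer}, the coefficient $\eta^{2s}X_{\lambda}(\Omega_{\eta})\eta^{-n(p-1)/(p+1)} \to 0$ as $\eta\to 0$), so they and their $v$-derivatives vanish at $\eta=0$. Hence for $u \in T_{\bar{v}}U$ and $w\in H^{s}_{\Omega}$,
\begin{align*}
G_{v}|_{(\bar{v},0)}(u)(w) \;=\; B(u,w) \;:=\; \frac{c_{n,s}}{2}\int_{T(\Omega)}\frac{(u(x)-u(y))(w(x)-w(y))}{\abs{x-y}^{n+2s}}\,dxdy,
\end{align*}
which is a bounded symmetric bilinear form on $H^{s}_{\Omega}\times H^{s}_{\Omega}$ by Cauchy--Schwarz and the definition of $\norm{\cdot}_{H^{s}_{\Omega}}$.

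Next I would establish a zero-mean fractional Poincar\'e inequality: there exists $C>0$ such that
\begin{align*}
\norm{u}_{L^{2}(\Omega)}^{2} \;\leq\; C\, B(u,u) \qquad \text{for every } u\in H^{s}_{\Omega} \text{ with } \int_{\Omega}u\,dx=0.
\end{align*}
The proof is by contradiction: a normalized violating sequence is bounded in $H^{s}_{\Omega}$ and, by the compact embedding in Proposition \ref{compact}, subconverges strongly in $L^{2}(\Omega)$ to some $u$ with $\norm{u}_{L^{2}(\Omega)}=1$ and $\int_{\Omega}u=0$. Weak lower semicontinuity of $B(\cdot,\cdot)^{1/2}$ then forces $B(u,u)=0$; since $T(\Omega)$ covers every pair with at least one coordinate in $\Omega$, this makes $u$ constant a.e.\ on $\mathbb{R}^{n}$, and the zero-mean constraint yields $u\equiv 0$, a contradiction. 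Combining this with the definition of $\norm{\cdot}_{H^{s}_{\Omega}}^{2}=\norm{\cdot}_{L^{2}(\Omega)}^{2}+(2/c_{n,s})B(\cdot,\cdot)$ delivers coercivity $B(u,u)\geq C'\norm{u}_{H^{s}_{\Omega}}^{2}$ for every $u \in T_{\bar v}U$.

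Finally, $T_{\bar{v}}U$ is a closed subspace of $H^{s}_{\Omega}$ (being the kernel of the continuous functional $u \mapsto \int_{\Omega}u$, since the tangent condition $\int_\Omega \abs{\bar v}^{p-1}\bar v\,u=0$ reduces to zero mean because $\bar v$ is a positive constant) and hence a Hilbert space in its own right. Lax--Milgram applied to the bounded coercive form $B$ on $T_{\bar{v}}U$ yields, for every $\phi\in\mathcal{F}=(T_{\bar{v}}U)^{*}$, a unique $u\in T_{\bar{v}}U$ with $B(u,\cdot)=\phi$ and $\norm{u}_{H^{s}_{\Omega}}\leq (1/C')\norm{\phi}_{\mathcal{F}}$, which is exactly the continuous invertibility of $G_{v}|_{(\bar{v},0)}$. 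The main obstacle will be the Poincar\'e step, specifically concluding that the weak limit is a \emph{global} constant from the vanishing of $B(u,u)$ over the cross-shaped set $T(\Omega)$; once that point is secured the remainder is a routine Hilbert-space argument.
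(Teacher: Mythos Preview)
Your proof is correct and follows essentially the same route as the paper: both compute $G_{v}|_{(\bar v,0)}$ to be the Gagliardo bilinear form $B$ and then invert it on the zero-mean subspace by a Hilbert-space argument (you via Lax--Milgram with an explicit Poincar\'e inequality, the paper via Riesz after declaring $B$ to be an inner product on $T_{\bar v}U$). Your version is in fact more complete, since the paper leaves implicit the Poincar\'e/coercivity step that makes the $B$-norm equivalent to the $H^{s}_{\Omega}$-norm on $T_{\bar v}U$.
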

\begin{proof} It is easy to see that $T_{\bar{v}}U$ is a Hilbert space. The inner product on this space is given by 
$$ \langle v,w \rangle = {G_{v}}_{|(\bar{v},0)}(v)(w)= \frac{c_{n,s}}{2} \int_{T(\Omega)}\frac{(v(x)-v(y))(w(x)-w(y)}{\abs{x-y}^{n+2s}} .$$
Then by an application of Riesz representation theorem, the conclusion follows. 
\end{proof}
Now, we are in a position to conclude the proof of Theorem\,\ref{minimizer4.6} by an application of IFT.\\

\noindent \textbf{Proof of Theorem \ref{minimizer4.6}}:   By IFT,  there exists a unique solution 
$v$ of $G(v,\eta)=0$ with $v$ close to $\bar{v}.$ Therefore, there exists a unique weak solution to \eqref{eqn:P4}
near $\bar{v}$ for small values of $\eta.$ By Lemma \ref{minimizer}, the minimizers converges 
to $\bar{v}$  in $H_{\Omega}^{s}$ as $\eta$ tends to zero. Also, from the above Remark \ref{rem4.4},  
these minimizers are weak solutions of problem (\ref{eqn:P4}). From this, the uniqueness of minimizers follows and this 
completes the proof. \qed \\
%\noindent Now we have the required prerequisites to apply the IFT and conclude the main result
\section{Acknowledgement} 
The first author thanks CSIR for the financial support under the scheme 09/1031(0009)/2019-EMR-I.
The second author thanks DST/SERB
for the financial support under the grant CRG/2020/000041.
%\newpage
 
\end{document}